\newcounter{comments}
\newenvironment{displaycomment}{\begin{list}{}{\rightmargin=1cm\leftmargin=1cm}\item\sf\begin{small}}{\end{small}\end{list}}
\def\comments{
  \setcounter{comments}{1}
  
  }
\setlist[enumerate]{label={\upshape(\roman*)}}
\newlist{myenumi}{enumerate}{1}
\setlist[myenumi,1]{label=\upshape(\roman*)}
\newlist{myenuma}{enumerate}{1}
\setlist[myenuma,1]{label=\upshape(\alph*)}
\declaretheorem[name=Theorem,numberwithin=section]{thm}
\declaretheorem[name=Theorem A,numbered=no]{thmA}
\declaretheorem[name=Theorem,numbered=no]{thmnonumber}
\declaretheorem[name=Theorem B,numbered=no]{thmB}
\declaretheorem[name=Theorem C,numbered=no]{thmC}
\declaretheorem[name=Lemma,numberlike=thm]{lem}
\declaretheorem[name=Corollary,numberlike=thm]{cor}
\declaretheorem[name=Proposition,numberlike=thm]{prop}
\declaretheorem[name=Definition,numberlike=thm,style=definition,qed=\(\blacklozenge\)]{defn}
\declaretheorem[name=Remark,numberlike=thm,style=definition,qed=\(\blacklozenge\)]{rem}
\crefname{thm}{Theorem}{Theorems}
\crefname{lem}{Lemma}{Lemmas}
\crefname{defn}{Definition}{Definitions}
\crefname{prop}{Proposition}{Propositions}
\crefname{cor}{Corollary}{Corollaries}
\crefname{equation}{}{}
\newcommand{\IN}{\mathbb{N}}
\newcommand{\Z}{\mathbb{Z}}
\newcommand{\R}{\mathbb{R}}
\newcommand{\IR}{\mathbb{R}}
\newcommand{\C}{\mathbb{C}}
\newcommand{\toprm}{\mathrm{top}}
\newcommand{\lf}{\mathrm{lf}}
\newcommand{\an}{\mathrm{an}}
\newcommand{\ch}{\operatorname{ch}}
\newcommand{\alg}{\mathrm{alg}}
\newcommand{\Tr}{\mathrm{Tr}}
\newcommand{\fin}{\mathrm{fin}}
\newcommand{\dR}{\mathrm{dR}}
\newcommand{\cP}{\mathcal{P}}
\newcommand{\cA}{\mathcal{A}}
\newcommand{\cB}{\mathcal{B}}
\newcommand{\cC}{\mathcal{C}}
\newcommand{\HX}{H \! X}
\newcommand{\KH}{K \! H}
\newcommand{\HC}{H \! C}
\newcommand{\HP}{H \! P}
\DeclareMathOperator{\Hom}{Hom}
\newcommand{\per}{\mathrm{per}}
\title{Transgressing the algebraic coarse character map}
\date{}
\author{Alexander Engel\thanks{
Universit\"at Greifswald,
Walther--Rathenau--Stra{\ss}e 47,
17489 Greifswald,
Germany\newline
\href{mailto:alexander.engel@uni-greifswald.de}{alexander.engel@uni-greifswald.de}
}
\and
Matthias Ludewig\thanks{
Universit\"at Greifswald,
Walther--Rathenau--Stra{\ss}e 47,
17489 Greifswald,
Germany\newline
\href{mailto:matthias.ludewig@uni-greifswald.de}{matthias.ludewig@uni-greifswald.de}
}
}
\begin{document}

%
%

\maketitle

\begin{abstract}
We pursue an old conjecture of John Roe about the algebraic $K$-theory of the algebra of finite propagation, locally trace-class operators, namely that transgressing the algebraic coarse character map on this algebra to a Higson dominated corona coincides with the usual Chern character on the corona.
\end{abstract}

\setcounter{tocdepth}{2}
\tableofcontents

\section{Introduction}

To any proper metric space $M$, one has a canonically associated ``boundary at infinity'', its Higson corona $\partial_h M$ (see Definition \ref{defn_higson_corona}).
There exists an ``index pairing'', which pairs classes in the $K$-theory group $K^{p-1}(\partial_h M)$ with classes in the $K$-theory group $K_p^{\mathrm{top}}(C^*M)$ of the Roe algebra of $M$, to obtain an integer \cite{EmeMey}. This is the pairing on the right hand side of \eqref{PairingEquation} below.

On the other hand, given a class $x$ in $K^{p-1}(\partial_h M)$, we may form the Chern character $\ch(x)$ in Alexander--Spanier cohomology of $\partial_h M$ \cite{gorokhovsky} and then transgress to obtain a coarse cohomology class $T\ch(x)$ in $\HX^p(M)$.
Coarse cohomology does not directly pair with $K$-theory classes in the Roe algebra, but it does pair (via the Chern character in cyclic homology) with algebraic $K$-theory classes over the dense subalgebra $\mathcal{B}M \subset C^*M$, the algebra of locally trace-class, finite propagation operators on $M$ \cite{roe_coarse_cohomology}. This is the pairing on the left hand side of \eqref{PairingEquation}.

In total, we have described two different ways to obtain a complex number from a $K$-theory class $\xi$ in $K^{\mathrm{alg}}_p(\mathcal{B}M)$ and a class $x$ in $K^{p-1}(\partial_h M)$, and we may ask whether these two numbers are equal:
\begin{equation}
\label{PairingEquation}
\big\langle \chi \ch(\xi), T\ch(x)\big\rangle ~\stackrel{?}{=}~ \langle \iota_* \xi, x \rangle.
\end{equation}
In the term $\chi \ch(\xi)$ on the left hand side, $\ch$ denotes the Chern character from algebraic $K$-theory to periodic cyclic homology and $\chi$ denotes the so-called coarse character map essentially introduced by Roe \cite{roe_coarse_cohomology}, which maps (periodic) cyclic homology of $\mathcal{B}M$ to coarse homology.
On the right hand side, 
\[
\iota_* \colon K_*^{\mathrm{alg}}(\cB M) \longrightarrow K_*^{\mathrm{top}}(C^*M)
\]
denotes the canonical comparison map.
The following result was proved by Roe in \cite{roe_coarse_cohomology}.

\begin{thmnonumber}[{\cite[Proposition 5.29]{roe_coarse_cohomology}}]
Equation~\cref{PairingEquation} holds provided that
\begin{enumerate}
\item $M$ is a complete Riemannian manifold;
\item the class $x$ is the pull-back of a class defined on a Higson-dominated, metrizable corona $N$ of $M$ which is homeomorphic to a finite polyhedron;
\item $\xi$ is the coarse index class of a graded generalized Dirac operator on $M$ (in the sense of \cite{gromov_lawson_complete}).
\end{enumerate}
\end{thmnonumber}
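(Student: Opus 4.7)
The plan is to exploit the three hypotheses simultaneously: the complete Riemannian structure gives access to Dirac-type operators and smooth differential-geometric tools; the polyhedral corona $N$ allows representing $K$-theory classes by honest vector bundles (even case) or unitaries (odd case) up to stabilization; and the choice of $\xi$ as a coarse index reduces the algebraic $K$-theory input to a computable geometric quantity amenable to heat kernel methods.

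First I would reduce to the case that $p$ is even via the standard suspension correspondence between the even and odd pairings, so that $x \in K^0(N)$ is represented by a formal difference $[E] - [\C^k]$ of a vector bundle $E \to N$. Pulling back along the quotient map $\bar M \to N$ from the Higson compactification produces a vector bundle (still denoted $E$) on $M$ whose transition functions have vanishing variation at infinity. Twisting the Dirac operator $D$ whose coarse index class is $\xi$ by $E$ yields a twisted Dirac operator $D_E$ on $M$, and by the naturality of the Emerson--Meyer index pairing, the right-hand side of \cref{PairingEquation} is identified with $\ind(D_E) - k \cdot \ind(D)$, a difference of coarse indices.

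For the left-hand side, I would invoke an explicit local representative for the algebraic Chern character $\ch(\xi)$ in the (periodic) cyclic homology of $\cB M$, provided by a JLO-type or Connes local index cocycle evaluated on $D$. After application of the coarse character map $\chi$, this representative becomes a coarse homology class on $M$ captured by the Atiyah--Singer integrand $\hat A(TM)$, regarded as a closed differential form with controlled growth so as to define a coarse class. On the other hand, since $\ch(x)$ in Alexander--Spanier cohomology of $N$ is, under the de Rham-type comparison, nothing but the Chern character form of $E$, its transgression $T\ch(x)$ should, after pullback to $M$, be cohomologous to $\ch(E)$ viewed as a compactly-generated coarse cocycle. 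Pairing the two representatives produces $\int_M \hat A(TM) \wedge \ch(E)$.

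The main obstacle is the precise matching of these two representatives in coarse (co)homology: one must check simultaneously that $\chi \ch(\xi)$ is represented by the Atiyah--Singer integrand (which amounts to a non-compact version of Connes' local index formula with propagation control) and that the transgression $T\ch$ correctly converts the Alexander--Spanier Chern character on the corona into a cohomologous class on $M$. Once the two representatives are identified, \cref{PairingEquation} reduces to the equality between $\ind(D_E) - k\cdot \ind(D)$ and $\int_M \hat A(TM) \wedge (\ch(E) - k)$, which is the content of the coarse Atiyah--Singer index theorem in this setting. The Higson-dominance of $N$ is essential throughout to guarantee that all asymptotic trivializations and decay properties at infinity hold uniformly, so that the propagation and local-trace conditions defining $\cB M$ remain in force.
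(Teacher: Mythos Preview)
Your outline is essentially Roe's own argument from the cited memoir: represent $x$ by a virtual bundle $[E]-[\C^k]$ on the corona, extend to $M$, twist the Dirac operator, and reduce \eqref{PairingEquation} to an equality of the form
\[
\text{(relative/twisted index)} \;=\; \int_M \hat{A}(TM)\wedge\bigl(\ch(E)-k\bigr),
\]
which is then supplied by Roe's index theorem on open manifolds. The present paper does \emph{not} reprove the statement this way. It instead deduces it as the special case ``$\xi$ comes from a Dirac operator'' of Theorem~\ref{ThmMain1}, whose proof is a pure diagram chase: one inflates \eqref{SmallerDiagram} to \eqref{BigDiagram}, inserts the algebraic assembly map $A^{\alg}$ from summable Fredholm modules to $K^{\alg}_*(\cB M)$, and checks each cell separately---the left rectangle by the Moscovici--Wu comparison \cite{moscovici_wu} and the Connes--Moscovici local index formula, the right cells by naturality of the Chern character under boundary maps (Proposition~\ref{prop_different_cherns}), and the outer triangles by Lemma~\ref{lem_boundary_factors1} and \eqref{BoundaryComparisonMap_N}. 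The Dirac hypothesis is used only to guarantee that $\xi$ lies in the image of $A^{\alg}$. Your route is more hands-on and stays close to characteristic-class integrals; the paper's route is structural and immediately yields the stronger conclusion for \emph{any} summable Fredholm module, not just Dirac operators.

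Two points in your sketch need repair. First, there is no ``quotient map $\bar{M}\to N$'' from the Higson compactification to the corona; what one actually does is extend $E$ from $N$ to the compactification $M_N$ (this is where $N$ being a finite polyhedron, hence an ANR, is used) and then restrict to $M$ to obtain a bundle with vanishing variation at infinity. Second, the ``coarse Atiyah--Singer index theorem'' you invoke in the last step should be named precisely as Roe's index theorem on open manifolds; left unnamed, the final reduction reads as circular, since that theorem is exactly the analytic input that makes the whole argument work.
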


Roe posed the question whether \eqref{PairingEquation} holds in general \cite[Remark after Proposition 5.29]{roe_coarse_cohomology}, but to our knowledge, no answer to this question is available in the literature.
In concrete examples, the left hand side of \eqref{PairingEquation} is given as an operator trace and a priori takes values in complex numbers, while the right hand side of \eqref{PairingEquation} can be written as the index of an operator, which takes integer values. 
The equality of both is a ``quantization result'' for the trace-pairing; see \cite{LudewigThiangI}, where equality of \eqref{PairingEquation} is proved for general proper metric spaces and arbitrary classes in $K_*^\alg(\cB M)$, but only specific coarse cohomology classes.

\medskip

The purpose of this note is to show the following results. 
The first one is a variant of the above mentioned result of John Roe and introduces one of the main players of this note, the algebraic assembly map, which is defined in \S\ref{Assembly}.

\begin{thmA}
Suppose that $M$ is a complete Riemannian manifold and let $N$ be a Higson-dominated corona of $M$ such that $(M_N, N)$ is a finite CW-pair ($M_N$ is the compactification of $M$ by $N$).

Then equation \eqref{PairingEquation} holds for each class $\xi$ in $K_*^\alg(M)$, $* \in \{0, 1\}$, that lies in the image of the algebraic assembly map
\[
A^{\alg} \colon \left\{\substack{\text{\normalfont 1-summable}\\ 
	\text{\normalfont even/odd}\\\text{\normalfont Fredholm} \\  \text{\normalfont modules}}\right\}\longrightarrow K^{\alg}_*(\cB M)
\]
 and each class $x$ in $K^{*-1}(\partial_h M)$ that is obtained by pullback from $N$.
\end{thmA}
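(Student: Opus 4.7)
The plan is to exploit the hypothesis that $\xi$ lies in the image of the algebraic assembly map to reduce the general pairing to a pairing of data living on the finite CW-pair $(M_N, N)$, where Roe's result and classical index theory can be applied. Write $\xi = A^{\alg}(F)$ for a summable (even or odd) Fredholm module $F$, and let $\bar{x} \in K^{*-1}(N)$ be a class with $x = f^* \bar x$, where $f \colon \partial_h M \to N$ is the given Higson-dominated corona map.

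The first step is to produce an explicit cyclic cocycle representative for $\chi \ch(\xi) = \chi \ch(A^{\alg} F)$ in coarse homology of $M$. For this I would verify that $\ch \circ A^{\alg}$ is computed, up to the standard cyclic normalization, by the Connes character of $F$, viewed as a periodic cyclic homology class on $\cB M$ via the finite-propagation structure. Applying $\chi$ then yields a coarse homology class on $M$ whose pairing with $T\ch(x)$ can, by naturality of transgression, be rewritten as a pairing on $N$ between $\ch(\bar x)$ and a characteristic class of $F$ pushed forward to $N$ via the collar of $N$ in $M_N$.

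The second step treats the right-hand side $\langle \iota_* \xi, x\rangle$. Since $x = f^*\bar x$, the index pairing factors as $\langle \iota_* \xi, x\rangle = \langle \partial \iota_* \xi, \bar x\rangle_N$, where $\partial$ denotes the boundary map from $K_*(C^*M)$ into a $K$-homology group of $N$ implicit in the Emerson--Meyer construction. Because $\iota_* A^{\alg}(F)$ agrees with the topological index class of $F$, this pairing reduces to the Kasparov pairing of the $K$-homology class of $F$ with $\bar x$ on $N$. On the finite CW-pair $(M_N, N)$ the classical Chern character is an isomorphism after tensoring with $\C$, and so this pairing is in turn computed by the Chern--Connes pairing of $\ch(\bar x)$ with the Chern character of $F$.

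The conclusion is that both sides of \eqref{PairingEquation} equal the same explicit pairing on $N$ between $\ch(\bar x)$ and the Chern character of $F$. The main obstacle, and the step that requires the bulk of the work, is the commutativity of the diagram relating $A^{\alg}$, $\chi \ch$, $\iota_*$, the transgression $T$, and the ordinary Chern character on $N$; this is a compatibility between the algebraic assembly map and transgression, which essentially asserts that pushing an Alexander--Spanier cocycle on $N$ through the collar and into a coarse cocycle on $M$ pairs with $A^{\alg}(F)$ the same way the original cocycle on $N$ pairs with the $K$-homology class of $F$. Once this naturality is established, the theorem follows without any hypothesis that $\xi$ come from a Dirac operator, so that Roe's original result becomes the special case in which $F$ is the Fredholm module of a Dirac operator.
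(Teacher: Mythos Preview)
Your overall architecture matches the paper's: write $\xi = A^{\alg}(F)$, push the right-hand side through $\tau_N$ to a pairing on $N$ (this is the paper's Lemma~\ref{LemmaIndexPairing}), push the left-hand side through transgression to the same pairing on $N$, and then cancel Chern characters using that $(M_N,N)$ is a finite CW-pair. The paper packages this as commutativity of the inflated diagram~\eqref{BigDiagram}, whose subdiagrams are checked one by one.

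Two points where your sketch is imprecise deserve attention. First, the sentence ``$\ch\circ A^{\alg}$ is computed \dots\ by the Connes character of $F$, viewed as a periodic cyclic homology class on $\cB M$'' is not quite right: the Connes--Chern character of a summable Fredholm module is a class in $H_*^{\dR,\lf,\per}(M)$ (or dually in cyclic cohomology of $C_c^\infty(M)$), not in $\HP_*(\cB M)$. The correct statement, and the genuine content of the step you flag as the ``main obstacle'', is that $\chi\circ\ch\circ A^{\alg}(F)$ in $\HX_*^{\per}(M)$ equals the image of the Connes--Chern character of $F$ under the comparison map $c\colon H_*^{\dR,\lf,\per}(M)\to \HX_*^{\per}(M)$. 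This is exactly the result of Moscovici--Wu \cite{moscovici_wu}, which the paper invokes for the lower-left part of \eqref{BigDiagram}; your proposal does not name this ingredient, and without it the argument has a real gap. Second, your phrase ``Roe's result and classical index theory can be applied'' is misleading: the paper does not apply Roe's theorem as a black box but runs a parallel argument that works for arbitrary summable Fredholm modules, with Moscovici--Wu replacing the local index computation for Dirac operators and with the compatibility $\iota_*\circ A^{\alg}=A\circ[-]$ (built into the construction of $A^{\alg}$ in \S\ref{sec_alg_assembly}) replacing the analytic index. Once those two compatibilities are in place, the remaining squares (Lemma~\ref{lem_boundary_factors1}, Proposition~\ref{prop_different_cherns}, and \eqref{BoundaryComparisonMap_N}) are the naturality statements you allude to, and the theorem follows as you say.
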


Our second theorem complements the mentioned results from \cite{roe_coarse_cohomology,LudewigThiangI} by giving conditions on the space $M$ such that \eqref{PairingEquation} holds.

\begin{thmB}
Suppose that $M$ is a complete Riemannian manifold and let $N$ be a Higson-dominated corona of $M$ such that $(M_N, N)$ is a finite CW-pair ($M_N$ is the compactification of $M$ by $N$).
Suppose moreover that the Baum--Connes assembly map
\[
A \colon K_*^{\mathrm{lf}}(M) \longrightarrow K_*^{\mathrm{top}}(C^*M)
\]
is surjective and that the comparison map
\[
\KH_*(\cB M) \longrightarrow K_*^{\mathrm{top}}(C^*M)
\]
from homotopy invariant $K$-theory of $\cB M$ to topological $K$-theory of the Roe algebra is injective.

Then equation \eqref{PairingEquation} holds for any $\xi$ in $K_*^{\alg}(\cB M)$, $* \in \{0, 1\}$, and any $x$ obtained by pullback from~$N$.
\end{thmB}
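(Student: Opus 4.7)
The plan is to reduce Theorem~B to Theorem~A by exploiting the two hypotheses to show that every class $\xi \in K_*^{\alg}(\cB M)$ agrees, modulo data invisible to both sides of~\eqref{PairingEquation}, with a class in the image of the algebraic assembly map $A^{\alg}$. The central observation I wish to establish is that, for a fixed $x$ pulled back from $N$, both sides of~\eqref{PairingEquation}, viewed as functionals on $K_*^{\alg}(\cB M)$, factor through the homotopy-invariant $K$-theory $\KH_*(\cB M)$. For the right-hand side this is immediate: because $K_*^{\mathrm{top}}(C^*M)$ is homotopy invariant, $\iota_*$ factors through $\KH_*(\cB M) \to K_*^{\mathrm{top}}(C^*M)$, and pairing with $x$ then depends only on the image of $\xi$ in $\KH_*(\cB M)$. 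For the left-hand side I would show that $\xi \mapsto \langle \chi \ch(\xi), T\ch(x)\rangle$ vanishes on the kernel of the canonical map $K_*^{\alg}(\cB M) \to \KH_*(\cB M)$; this is the technical heart of the argument.

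Assuming this factorization, I proceed as follows. Given $\xi \in K_*^{\alg}(\cB M)$ with image $\bar\xi \in \KH_*(\cB M)$, the surjectivity of the Baum--Connes assembly $A$ yields a class $\eta \in K_*^{\lf}(M)$ with $A(\eta) = \iota_*\xi$. On a complete Riemannian manifold, $\eta$ admits a representative by a summable (even or odd) Fredholm module, so we may set $\xi' := A^{\alg}(\eta) \in K_*^{\alg}(\cB M)$. The compatibility $\iota_* \circ A^{\alg} = A$ between the algebraic and Baum--Connes assemblies, which should follow essentially from the construction of $A^{\alg}$ in \S\ref{Assembly}, gives $\iota_*\xi' = \iota_*\xi$. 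The injectivity hypothesis $\KH_*(\cB M) \hookrightarrow K_*^{\mathrm{top}}(C^*M)$ then upgrades this to $\bar\xi = \bar{\xi'}$ in $\KH_*(\cB M)$. By the factorization established above, both sides of~\eqref{PairingEquation} agree for $\xi$ and $\xi'$, and since $\xi'$ lies in the image of $A^{\alg}$, Theorem~A gives~\eqref{PairingEquation} for $\xi'$, hence for $\xi$.

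The step I expect to be most delicate is the homotopy invariance of the left-hand side, i.e., showing that $\xi \mapsto \langle \chi \ch(\xi), T\ch(x)\rangle$ factors through $\KH_*(\cB M)$. Periodic cyclic homology of $\cB M$ is not in general a homotopy functor, so this cannot be deduced by a formal argument at the level of $\HP_*$ alone; instead, I would try to exploit the fact that $T\ch(x)$ is transgressed from the Higson-dominated corona $N$ and therefore detects only information compatible with the finer structure seen by $\KH$. A secondary bookkeeping point is to record the compatibility $\iota_* \circ A^{\alg} = A$ cleanly, together with the existence of summable Fredholm module representatives for arbitrary classes in $K_*^{\lf}(M)$ on a complete Riemannian manifold.
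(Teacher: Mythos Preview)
Your overall strategy matches the paper's proof exactly: pick a summable Fredholm module $T$ with $A([T]) = \iota_*\xi$, set $\xi' = A^{\alg}(T)$, use injectivity of $\KH_*(\cB M) \to K_*^{\mathrm{top}}(C^*M)$ to conclude that $\xi$ and $\xi'$ have the same image in $\KH_*(\cB M)$, and then use that both sides of \eqref{PairingEquation} factor through $\KH_*(\cB M)$.

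The one substantive misstep is your treatment of the factorization of the left-hand side through $\KH$. You flag this as ``the technical heart of the argument'' and propose to prove it using special features of the transgressed class $T\ch(x)$, remarking that ``periodic cyclic homology of $\cB M$ is not in general a homotopy functor.'' This is backwards. The relevant notion is \emph{polynomial} homotopy invariance (which is how Weibel's $\KH$ is defined), and periodic cyclic homology \emph{is} polynomially homotopy invariant for $\Q$-algebras by Goodwillie's theorem. Hence the natural transformation $\ch \colon K^{\alg}_* \to \HP_*$ factors through $\KH_*$ by the universal property of $\KH$, with no input from the corona or the transgression. The paper simply records this as a known fact and uses it directly: once $\eta_{\KH}(\xi) = \eta_{\KH}(\xi')$, one has $\ch(\xi) = \ch(\xi')$ in $\HP_*(\cB M)$, and the rest is the diagram chase through \eqref{BigDiagram}. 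So the step you identify as most delicate is in fact immediate, while the genuine work (constructing $A^{\alg}$ and verifying the commutativity of the subdiagrams of \eqref{BigDiagram}, particularly the Moscovici--Wu comparison) lies elsewhere.
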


Applying a recent result of Bunke--Engel \cite{coarse_KH}, we obtain that the above conditions are satisfied, for example, for $M = \R^n$.

\begin{thmC}
Let $N$ be a metrizable, Higson-dominated corona of $\R^n$ such that $(\R^n_N, N)$ is a finite CW-pair.

Then equation \eqref{PairingEquation} holds for any $\xi$ in $K_*^{\alg}(\cB \R^n)$, $* \in \{0, 1\}$, and any $x$ obtained by pullback from~$N$.
\end{thmC}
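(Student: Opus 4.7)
The plan is to deduce Theorem C as a direct corollary of Theorem B. Given the hypothesis on $N$, it remains only to check the two conditions on $M = \R^n$ appearing in Theorem B: surjectivity of the Baum--Connes assembly map $A \colon K_*^{\lf}(\R^n) \to K_*(C^*\R^n)$, and injectivity of the comparison map $\KH_*(\cB \R^n) \to K_*^{\mathrm{top}}(C^*\R^n)$.

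For the first condition, I would appeal to the well-known fact that the coarse assembly map is an isomorphism for $\R^n$. This can be seen in several equivalent ways: via Yu's theorem, since $\R^n$ has finite asymptotic dimension; via the direct computation $K_*(C^*\R^n) \cong K_*^{\lf}(\R^n) \cong \Z$ concentrated in degree $n \bmod 2$ using an Eilenberg swindle in the transverse directions together with suspension; or via the uniform contractibility and bounded geometry of $\R^n$ combined with the Higson--Roe descent principle. In particular, $A$ is not merely surjective but an isomorphism, which suffices.

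For the second condition, I would invoke the recent theorem of Bunke--Engel \cite{coarse_KH}, which is precisely designed to compare $\KH$-theory of $\cB M$ with the topological $K$-theory of the Roe algebra $C^*M$ for spaces $M$ of a sufficiently tame nature. Among the examples covered by their main result is $M = \R^n$, for which the comparison map is (at least) injective. Thus both hypotheses of Theorem B are verified.

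With these two inputs in hand, the statement of Theorem B applied to $M = \R^n$ yields exactly the conclusion of Theorem C. Since essentially all the heavy lifting takes place in Theorem B, the only obstacle here is bookkeeping: ensuring that the class of spaces covered by \cite{coarse_KH} indeed contains $\R^n$ and that the injectivity of the comparison map there matches the formulation used in Theorem B. No further computation on the side of the corona $N$, the Chern character, or the transgression is needed.
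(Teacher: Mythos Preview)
Your proposal is correct and matches the paper's own argument essentially verbatim: Theorem~C is obtained as a direct corollary of Theorem~B by checking that for $M=\R^n$ the coarse assembly map is an isomorphism (well known) and that the comparison map $\KH_*(\cB\R^n)\to K_*^{\mathrm{top}}(C^*\R^n)$ is an isomorphism by the Bunke--Engel result. The only addendum the paper makes is to sketch why the latter holds---both sides are coarse homology theories, so the claim reduces via \cite{coarse_KH} to the case of a point, where it is a theorem of Corti\~nas--Thom \cite{CT}.
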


Note that Theorem B (and consequently Theorem C) also hold more generally for the algebra $\cB^p M$ of finite propagation, locally $p$-summable operators (Remark~\ref{rem_nilinvariance}).

\subsubsection*{Outline of the argument.}

Let $(M_N, N)$ be a finite CW-pair and $N$ be Higson dominated. 
Then in Section~\ref{sec_transgress_main_diag} we reduce \eqref{PairingEquation} to commutativity of the diagram
\begin{equation}
\label{eq_big_diag_intro}
\begin{tikzcd}[column sep=3cm]
	K^{\mathrm{top}}_*(C^*M)
	\ar[r, "\tau_N"]
		&
		\tilde{K}_{*-1}^\an(N)
\\
	K^{\mathrm{alg}}_*(\mathcal{B}M)
	\ar[u, "\iota_*"]
	\ar[d, "\ch"']
		&
		\tilde{K}_{*-1}(N)
		\ar[dd, "\ch"]
		\ar[u, "\cong"]
\\
	\HP_*(\mathcal{B}M)
	\ar[d, "\chi"']
\\
	\HX^{\mathrm{per}}_*(M)
	\ar[r, "T_N"']
	&
		\tilde{H}_{*-1}^{\mathrm{per}}(N)
\end{tikzcd}
\end{equation}
stating that the two maps from $K^{\mathrm{alg}}_*(\mathcal{B}M)$ to $\tilde{H}_{*-1}^{\mathrm{per}}(N)$ in the diagram coincide. 

Unfortunately, we cannot establish commutativity of the diagram \eqref{eq_big_diag_intro} unconditionally.
Instead, in the Section~\ref{sec_reduction_elliptic}, under the additional assumption that $M$ is a smooth manifold, we make a further reduction to $1$-summable Fredholm modules by using the algebraic assembly map which we discuss in Section~\ref{sec_alg_assembly}.
This leads to Theorem A, which appears as Theorem~\ref{ThmMain1} in the main text.

Again, unfortunately, it is not clear if the assumptions of this intermediate theorem (that all classes in $K^\alg_*(\cB M)$ come from $1$-summable Fredholm modules) are always satisfied, hence we introduce Weibel's $\KH$-theory as a further tool. 
This step is based on the following diagram which can be grafted into the above one:
\begin{equation}
\label{eq_big_diag_2_intro}
\begin{tikzcd}
&
	K^{\mathrm{top}}_*(C^*M)
	&
	&
		K^{\mathrm{an},\mathrm{lf}}_*(M)
		\ar[ll, "A"']
		\ar[ddd, "\substack{\text{Connes--} \\ \text{Chern} \\ \text{character}}"]
\\
\KH_*(\mathcal{B}M)
\ar[ur]
\ar[dr]
	&
	K^{\mathrm{alg}}_*(\mathcal{B}M)
	\ar[u, "\iota_*"]
	\ar[l]
	\ar[d, "\ch"']
	&
	\left\{\substack{\text{1-summable}\\ 
	\text{even/odd}\\\text{Fredholm} \\  \text{modules}}\right\}
	\ar[ur, "{[-]}"']
	\ar[l, "A^{\mathrm{alg}}"']
\\
&
	\HP_*(\mathcal{B}M)
	\ar[d, "\chi"']
\\
&
	\HX^{\mathrm{per}}_*(M)
	&
	&
		H_*^{\dR, \mathrm{lf}, \mathrm{per}}(M),
		\ar[ll, "c"']
\end{tikzcd}
\end{equation}
This leads to Theorem B and consequently Theorem C.

\paragraph{Acknowledgements}
The first named author acknowledges financial support by the Deutsche Forschungsgemeinschaft DFG through Priority Programme SPP 2026 ``Geometry at Infinity'' (EN 1163/5-1, project number 441426261, \emph{Macroscopic invariants of manifolds}).
The second named author would like to thank SFB 1085 ``Higher Invariants'' for financial support. We thank Bernhard Hanke for a careful reading of the first version of this article.

\section{Preliminaries}
\label{sec_preliminaries}

\subsection{The index pairing and the analytic transgression map}
\label{sec_index_pairing}

Let $M$ be a proper metric space.
We start by discussing the index pairing
\begin{equation}
\label{IndexPairing}
K_*^{\mathrm{top}}(C^*M) \times \tilde{K}^{*-1}(\partial_hM) \longrightarrow \Z,
\end{equation}
which is, equivalently, a map
\[
K^{\mathrm{top}}_*(C^*M) \longrightarrow \Hom\big(\tilde{K}^{*-1}(\partial_h M), \Z\big).
\]
Here $\partial_h M$ is the Higson corona of $M$, defined as follows:
\begin{defn}\label{defn_higson_corona}
The Higson corona $\partial_h M$ of $M$ may be obtained as the Gelfand space of the quotient $C^*$-algebra $C_h(M)/C_0(M)$, where $C_h(M)$ is the algebra of continuous functions on $M$ with vanishing variation at infinity (see \cite[Chapter~5.1]{roe_coarse_cohomology}).

Its $K$-theory is, by definition, the $K$-theory of the $C^*$-algebra $C(\partial_h M) = C_h(M)/C_0(M)$. 
\end{defn}

$C^*M$ in \eqref{IndexPairing} is the Roe algebra of $M$, which is the norm closure of the algebra $\cB M$, consisting of all finite propagation, locally trace-class operators, acting on a separable Hilbert space $H$ with an ample representation of $C_0(M)$. In the following we will use that by Borel functional calculus we can extend this representation to a representation of the $C^*$-algebra $C_b(M)$ of all continuous, bounded functions on $M$.

\medskip

The pairing \eqref{IndexPairing} may be defined as follows.
Consider the map
\[
C^*M \otimes C(\partial_h M) \longrightarrow C^*M/\mathbb{K}(H), \qquad T \otimes [f] \longmapsto [T\tilde{f}],
\]
where we lift $f \in C(\partial_h M) = C_h(M)/C_0(M)$ to an element $\tilde{f}$ of $C_h(M)$, which is a well-defined $*$-homomorphism by the fact that elements of $C_h(M)$ have compact commutator with elements of $C^*M$ \cite[Proposition~4.1]{qiao_roe}, \cite[Lemma~3.9]{bunke2024coronascalliastypeoperators}. 
The index pairing \eqref{IndexPairing} is then obtained by applying the $K_1$-functor to this $*$-homomorphism, precomposing with the exterior product map and postcomposing with the $K$-theory boundary map to $K_0(\mathbb{K}(H)) \cong \Z$.
This explains the right hand side of \eqref{PairingEquation}.

\medskip

Suppose now that $N$ is a metrizable, Higson dominated corona of $M$, which means that there exists a continuous surjection $\partial_h M \to N$. 
Write $M_N \coloneqq M \sqcup_{\partial_h M} N$ for the corresponding compactification of $M$ by $N$.
The ample representation of $C_0(M)$ on $H$ may then be extended to a representation of $C(M_N)$. 
By $C^*_N M$ we denote the $C^*$-algebra generated by the locally compact operators on $H$ with controlled propagation for the coarse structure $\cC_N$ induced by the compactification $M_N$, see \cite[Exercise 6.1.10]{higson_roe}. 
Because the corona $N$ is Higson dominated, the coarse structure $\cC_N$ is coarser than the metric one and hence we have an inclusion $\iota_{\cC}\colon C^* M \subset C^*_N M$. This provides a map $K_*^{\mathrm{top}}(\iota_{\cC})\colon K_*^{\mathrm{top}}(C^* M) \to K_*^{\mathrm{top}}(C^*_N M)$.

The significance of $C^*_N M$ is that it coincides with the relative dual algebra $\mathfrak{D}(C(M_N)/\!\!/C_0(M))$ and the $K$-theory of the latter is isomorphic to the reduced analytic $K$-homology $\tilde{K}^{\mathrm{an}}_{*-1}(N)$ of the corona $N$, see \cite[Thm.\ 6.5.1 \& Cor.\ 6.5.2]{higson_roe}. 
(In \cite[Thm.\ 5.16]{roe_coarse_cohomology} this result is attributed to the never published preprint \cite{higson_never} of Higson.) 
Hence we have an isomorphism $\tilde\tau_N\colon K_*^{\mathrm{top}}(C^*_N M) \cong \tilde{K}_{*-1}^{\mathrm{an}}(N)$. 
Note that this result uses metrizability of $N$ (at least, the argument of Higson--Roe needs it).
Combining this with the previous map, we obtain a map
\[
\tau_N \colon K_*^{\mathrm{top}}(C^*M) \xrightarrow{K_*^{\mathrm{top}}(\iota_{\cC})} K_*^{\mathrm{top}}(C^*_NM) \stackrel{\tilde\tau_N}\cong \tilde{K}_{*-1}^{\mathrm{an}}(N) \cong \tilde{K}_{*-1}(N).
\]

\begin{lem}
\label{LemmaIndexPairing}
For any metrizable, Higson-dominated corona $N$ of $M$, we have a commutative diagram
\[
\begin{tikzcd}
K_*^{\mathrm{top}}(C^*M)
\ar[rr, "{\text{\normalfont index pairing}}"]
\ar[d, "K_*^{\mathrm{top}}(\iota_{\cC})"']
\ar[dr, "\tau_N"]
&
&
\Hom\big(\tilde{K}^{*-1}(\partial_h M), \Z\big)
\ar[d]
\\
K_*^{\mathrm{top}}(C^*_NM) 
\ar[r, "\tilde\tau_N"]
&
\tilde{K}_{*-1}^{\mathrm{an}}(N)  
\ar[r]
&
\Hom\big(\tilde{K}^{*-1}(N), \Z\big).
\end{tikzcd}
\]
\end{lem}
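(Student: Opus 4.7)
The plan is to exhibit both composite maps $K_*(C^*M) \to \Hom(\tilde K^{*-1}(\partial_h M),\Z) \to \Hom(\tilde K^{*-1}(N),\Z)$ and $K_*(C^*M) \xrightarrow{\tau_N} \tilde K_{*-1}^{\an}(N) \to \Hom(\tilde K^{*-1}(N),\Z)$ as arising, via the Higson--Roe identification of $\tilde K_{*-1}^{\an}(N)$ with $K_*(C^*_N M)$, from one and the same diagram of $*$-homomorphisms, with the only discrepancy being the pullback $\pi^* \colon C(N) \hookrightarrow C(\partial_h M)$ along the corona surjection $\pi \colon \partial_h M \to N$. Naturality of the boundary map in $K$-theory and of the exterior product then yields the claim.

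First I would set up the analogue of the index pairing for the relative dual algebra $C^*_N M$. The ample representation of $C_0(M)$ extends to $C(M_N)$, and for $f \in C(M_N)$ and $T \in C^*_N M$ the commutator $[T,f]$ is compact (this is the direct $N$-analogue of the Higson-corona statement cited from \cite{qiao_roe,bunke2024coronascalliastypeoperators}, and is essentially what makes $C^*_N M$ a dual algebra relative to $C(M_N)/\!\!/C_0(M)$). Hence the lift-and-multiply formula $T \otimes [f] \mapsto [T\tilde f]$ with $\tilde f \in C(M_N)$ descends to a well-defined $*$-homomorphism
\[
m_N \colon C^*_N M \tens C(N) \longrightarrow C^*_N M / \mathbb{K}(H).
\]
Applying $K_1$, precomposing with the external product and postcomposing with the $K$-theory boundary map for $0 \to \mathbb{K}(H) \to C^*_N M \to C^*_N M/\mathbb{K}(H) \to 0$, one obtains a pairing $K_*(C^*_N M) \times \tilde K^{*-1}(N) \to \Z$. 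Under the Higson--Roe isomorphism $K_*(C^*_N M) \cong \tilde K_{*-1}^{\an}(N)$, this pairing is the standard $K$-homology/$K$-theory pairing on $N$; this identification is essentially built into the identification of $C^*_N M$ with the relative dual algebra in \cite[Thm.~6.5.1]{higson_roe}, and I expect this verification to be the main obstacle and to require the most careful unpacking of the Higson--Roe construction.

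Next I would establish the commutative diagram of $*$-homomorphisms
\[
\begin{tikzcd}[column sep=1.3cm]
C^*M \tens C(\partial_h M) \ar[r, "m_{\partial_h M}"] & C^*M / \mathbb{K}(H) \ar[d, "\iota"] \\
C^*M \tens C(N) \ar[u, "\id \tens \pi^*"] \ar[r, "\iota \tens \id"'] & C^*_N M \tens C(N) \ar[r, "m_N"'] & C^*_N M / \mathbb{K}(H)
\end{tikzcd}
\]
where $m_{\partial_h M}$ is the lift-and-multiply map of Section~\ref{sec_index_pairing}. Commutativity is immediate at the level of elementary tensors: both routes send $T \tens [f]$ (with $f \in C(N)$) to the class of $T\tilde f$ for any lift $\tilde f \in C(M_N) \subset C_h(M)$ of $f$.

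Finally, I would apply $K_1$, external product with $\tilde K^{*-1}(N)$, and the boundary map to the above diagram. Naturality of the external product and of the boundary map (with respect to the inclusion $\iota \colon C^*M \hookrightarrow C^*_N M$ and its induced map on extensions by $\mathbb{K}(H)$) then converts the commutative $*$-homomorphism diagram into the asserted commutative diagram in the lemma, the top route evaluating the index pairing at $\pi^* x$ for $x \in \tilde K^{*-1}(N)$ and the bottom route evaluating the analytic $K$-homology pairing between $\tau_N(\xi)$ and $x$.
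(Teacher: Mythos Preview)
Your proposal is correct and follows essentially the same approach as the paper: both construct an index-type pairing for $C^*_N M$ via a lift-and-multiply $*$-homomorphism and then compare it with the original index pairing via the inclusion $C^*M \hookrightarrow C^*_N M$ and the pullback $C(N)\hookrightarrow C(\partial_h M)$. The only cosmetic differences are that the paper phrases the construction in the language of the general relative dual algebra $\mathfrak{D}(A/\!\!/J)$ (specializing to $A=C(M_N)$, $J=C_0(M)$) and targets the Calkin algebra $\mathbb{B}(H)/\mathbb{K}(H)$ rather than $C^*_N M/\mathbb{K}(H)$; the step you flag as ``the main obstacle'' (identifying the $m_N$-pairing with the standard $K$-homology pairing under the Higson--Roe isomorphism) is likewise not spelled out in the paper's proof.
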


\begin{proof}
Let $A$ be a separable $C^*$-algebra, with a non-degenerate representation on a Hilbert space $H$ such that no non-zero $a \in A$ acts as a compact operator and let $J \subseteq A$ be an ideal.
Then the relative dual algebra $\mathfrak{D}(A/\!\!/J)\subseteq \mathbb{B}(H)$ consists of those bounded operators $T$ on $H$ such that $[T, a]$ is compact for all $a \in A$ and such that $Ta$ is compact for all $a \in J$.
It comes with a $*$-homomorphism
\[
\mathfrak{D}(A/\!\!/J) \otimes A/J \longrightarrow \mathbb{B}(H)/\mathbb{K}(H), \qquad T \otimes [a] \longmapsto Ta,
\]
which after applying $K$-theory yields an index pairing
\[
K_*^{\mathrm{top}}(\mathfrak{D}(A/\!\!/J)) \times K_{*-1}^{\mathrm{top}}(A/J) \longrightarrow \Z
\]
and consequently a map
\[
K_*^{\mathrm{top}}(\mathfrak{D}(A/\!\!/J)) \longrightarrow \Hom(K_{*-1}^{\mathrm{top}}(A/J), \Z).
\]
We apply this for $A = C(M_N)$ and $J = C_0(M)$, which results in $A/J\cong C(N)$, hence $K_{*-1}^{\mathrm{top}}(A/J) = K^{*-1}(N)$, and, as noted above, $\mathfrak{D}(A/\!\!/J) = C^*_N M$. 
Comparing this with the definition of the index pairing on the level of $C^*M$ explained above yields the desired commutativity.
\end{proof}

Analytic $K$-homology satisfies strong excision, which results in an isomorphism $K^\an_*(M_N,N) = K^{\an,\mathrm{lf}}_*(M)$. The boundary map in the long exact sequence for the pair $(M_N,N)$ is therefore a map $\partial\colon K_*^{\an,\lf}(M) \to K^\an_{*-1}(N)$ whose image is contained in the reduced $K$-homology $\tilde{K}_{*-1}^\an(N)$.
One then has the following result:

\begin{lem}[{\cite[Rem.\ 12.3.8]{higson_roe}}]
\label{lem_boundary_factors1}
We have a commutative diagram
\begin{equation*}
\begin{tikzcd}
  & K_*^{\mathrm{top}}(C^*M) \arrow[rr, "\tau_N", bend left] 
    & 
K_*^{\mathrm{an},\lf}(M) \arrow[l, "A"]
\arrow[r, "\partial"'] 
      &  
    \tilde{K}_{*-1}^\an(N),
  & 
\end{tikzcd}
\end{equation*}
where $A$ is the analytic coarse assembly map.
\end{lem}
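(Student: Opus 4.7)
The plan is to identify both $\tau_N \circ A$ and $\partial$ with a single connecting map in $K$-theory obtained from the short exact sequence of $C^*$-algebras
\[
0 \longrightarrow C_0(M) \longrightarrow C(M_N) \longrightarrow C(N) \longrightarrow 0
\]
via Paschke duality. The key technical input is that Paschke duality, which already provides the Higson--Roe isomorphism $K_*(C^*_N M) = K_*(\mathfrak{D}(C(M_N) /\!\!/ C_0(M))) \cong \tilde{K}^{\mathrm{an}}_{*-1}(N)$ used in the definition of $\tau_N$, is natural with respect to $*$-homomorphisms and intertwines six-term boundary maps in analytic $K$-homology with the $K$-theory connecting maps of the corresponding (relative) dual algebras.

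First I would recall that strong excision gives $K^{\mathrm{an},\mathrm{lf}}_*(M) \cong K^{\mathrm{an}}_*(M_N, N)$, so the map $\partial$ in the statement is just the boundary map in the long exact sequence of the pair $(M_N, N)$. Next I would recall that the analytic coarse assembly map $A$ is, by its definition in \cite[Ch.\ 12]{higson_roe}, induced by a natural inclusion of $C^*M$ into a dual algebra associated to $C_0(M)$; the further inclusion $C^*M \hookrightarrow C^*_N M$ used in $\tau_N$ corresponds to an inclusion of dual algebras which is compatible with the short exact sequence above. Naturality of Paschke duality then shows that the composite $\tau_N \circ A$, read through these identifications, is precisely the $K$-theory connecting homomorphism associated to the short exact sequence of $C^*$-algebras, which under Paschke duality is dual to the $K$-homology boundary map $\partial$.

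The main obstacle is purely one of bookkeeping, not a conceptual one: one has to verify that the assembly-map formulation used in the diagram and the Paschke-dual description are compatible, and that the various six-term sequences fit into a commuting ladder. Since the resulting identity is precisely the content of \cite[Rem.\ 12.3.8]{higson_roe}, in practice I would invoke that reference and spell out only the translation between their conventions and those used here, rather than reprove naturality of Paschke duality from scratch.
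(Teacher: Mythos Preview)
Your proposal is correct and matches the paper's approach: the paper gives no proof at all and simply cites \cite[Rem.~12.3.8]{higson_roe}, and your sketch of the Paschke-duality argument is precisely the content of that reference. In effect you have done slightly more than the paper by outlining why the cited remark yields the claim.
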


\subsection{The algebraic Chern character and the coarse character map}
\label{sec_coarse_character}

Let $M$ be still a proper metric space.
The algebraic Chern character is abstractly defined as a natural transformation
\[
\ch \colon K^{\mathrm{alg}}_*(\cB M) \longrightarrow \HP_*(\cB M)
\]
from algebraic $K$-theory to periodic cyclic homology (see \cite[\S5.1]{loday_cyclic_hom}).
Recall that there are natural comparison maps
\begin{equation*}
\HP_*(\cB M) \longrightarrow \HC_{*+2k}(\cB M), \rlap{\qquad $k \in \Z$.}
\end{equation*}
In degrees $* \in \{0, 1\}$, the postcomposition of the Chern character with these maps allows explicit formulas; they are
\begin{align*}
\ch([P]) 
&= \frac{(2k)!}{k!}\cdot \big[\Tr(\underbrace{P\otimes \cdots \otimes P}_{2k+1})\big] \in \HC_{2k}(\cB M)
\\
\ch([U])&= k!\cdot \big[\Tr(\underbrace{(U-1) \otimes (U^{-1} - 1)\otimes \cdots }_{k+1~\text{pairs}})\big] \in \HC_{1 + 2k}(\cB M).
\end{align*}

We will use the following picture of coarse homology: $n$-chains are given by locally finite, signed Borel measures on $M^{n+1}$ that are supported in an $r$-neighborhood of the diagonal, endowed with the Alexander--Spanier differential; this yields the chain complex $C\!X_*(M)$ of coarse cochains.
One then has coarse character maps
\begin{equation}
\label{CoarseCharacterMap}
\chi_n \colon \HC_n(\cB M) \longrightarrow \HX_n(M),
\end{equation}
see \cite{LudewigThiangI}.
On the level of cyclic chains, $\chi_n(T_0 \otimes \cdots \otimes T_n)$ is the unique signed measure on $M^{n+1}$ 
with the property that for $f_0, \dots, f_n \in C_c(M)$, one has the formula 
\[
\int_M \cdots \int_M f_0(x_0) \cdots f_n(x_n) d\chi(T_0 \otimes \cdots \otimes T_n)(x_0, \dots x_n) = \Tr(f_0 T_0 \cdots f_n T_n).
\]

We define periodic version of the coarse character map
\[
\chi \colon \HP_*(\cB M) \longrightarrow \HX^{\mathrm{per}}_*(M) := \prod_{k \in \Z} \HX_{* + 2k}(M)
\]
which involves powers of $2 \pi i$; 
it is given as the product of the compositions
\[
\begin{tikzcd}
\HP_*(\cB M)
\ar[r]
&
\HC_{*+2k}(\cB M)
\ar[r, "(2 \pi i)^{-k}\cdot \chi_{*+2k}"]
&[2cm]
\HX_{*+2k}(M).
\end{tikzcd}
\]
We conclude that for $*\in \{0, 1\}$ the composition
\[
\begin{tikzcd}
K^{\mathrm{alg}}_*(\cB M) \ar[r]
&
 \HP_*(\cB M) \ar[r, "\chi"]
 & 
 \HX_{*}^{\mathrm{per}}(M)
\end{tikzcd}
\]
in the factor labelled by $k \in \Z$ is given by
\[
\int_{M^{2k+1}} f_0 \otimes \cdots \otimes f_{2k}\, d\ch([P]) 
= \frac{(2k)!}{(2 \pi i)^k k!}	\cdot \chi(f_0 P \cdots f_{2k} P)
\]
in the even case and by
\[
\int_{M^{2k+2}} f_0 \otimes \cdots \otimes f_{2k+1}\, d\ch([U]) 
= \frac{k!}{(2 \pi i)^k}	\cdot \chi\big(f_0 (U-1) f_1 (U^{-1} - 1) \cdots  f_{2k+1} (U^{-1} - 1)\big)
\]
in the odd case.

\subsection{Analytic and topological Chern characters, and boundary maps}
\label{sec_chern_boundary}

In this section the assumption will come in that $M$ is a smooth manifold and that it is the interior of a finite CW-pair $(M_N,N)$, where $M_N$ is any compactification of $M$ by a corona $N$. This finiteness assumption is due to \cite{baum_higson_schick} which we are going to use.

Let us discuss the different pictures for $K$-homology and the respective Chern characters on them. The necessity for changing pictures is seen in Diagram~\eqref{eq_big_diag_intro}: The Chern character on the left hand side is defined analytically by computing traces of operators and we have to compare it with a Chern character on the corona $N$ of $M$. 
Assuming $N$ to be a (compact) manifold we could use the analytic Chern character on $\tilde{K}^\an_{*-1}(N)$ and therefore stay completely in the analytic world to prove commutativity of \eqref{eq_big_diag_intro}. 
Since we do not want to restrict ourselves even further by assuming that $N$ is a smooth manifold, we are forced to use a different picture for the $K$-homology of $N$ and the respective Chern character.

There are two other pictures of $K$-homology: The topological one, which is the homology theory associated to the $K$-theory spectrum, and the geometric one, introduced by Baum--Douglas \cite{baum_douglas}. 
Each comes with a comparison map to analytic $K$-homology which is known to be an isomorphism whenever $(M_N, N)$ is a finite CW-pair \cite{baum_higson_schick}. 
Each of these two pictures of $K$-homology also comes with a naturally defined Chern character on it to the periodized homology of $N$. Since for our purposes both of these two pictures of $K$-homology do the job, we just pick one once and for all and denote it by $\tilde{K}_{*-1}(N)$. 
This explains the upper right vertical isomorphism in Diagram~\eqref{eq_big_diag_intro} and our necessity for assuming that $(M_N, N)$ is a finite CW-pair.

Let us discuss the Connes--Chern character which is the Chern character on $K$-homology in the analytic picture \cite{connes_noncomm_diff_geo} (from here on we need in this discussion that $M$ is a smooth manifold): Analytic $K$-homology is defined in terms of Fredholm modules, but the character is only defined for finitely summable ones (i.e., $p$-summable for some $p\in [1,\infty)$), i.e. we have a map $K^{\an,\lf,\fin}_*(M) \to H^{\dR,\lf,\per}_*(M)$ to locally finite, periodized de Rham homology of $M$. 
By the localized index formula of Connes--Moscovici \cite[Theorem~3.9]{connes_moscovici}, one concludes that the Connes--Chern character of the finitely summable Fredholm module defined by a graded generalized Dirac operator on $M$ (in the sense of \cite{gromov_lawson_complete}) coincides with the classical Chern character of the $K$-homology class of that operator \cite[Theorem~4.6]{connes_moscovici}. By this together with the fact that (rationally) every $K$-homology class of $M$ may be represented by the class of a graded generalized Dirac operator on $M$ (cf.~footnote~\ref{footnote_summable}), we infer that the Connes--Chern character descends to $K^{\an,\lf}_*(M)$ and we have a commutative diagram
\begin{equation}
\label{eq_comp_Chern}
\begin{tikzcd}
		K^{\mathrm{an}, \mathrm{lf}}_*(M)
		\ar[dd, "\substack{\text{Connes--} \\ \text{Chern} \\ \text{character}}"']
		&[-0.3cm]
		K^{\mathrm{lf}}_*(M)
		\ar[dd, "\ch"]
		\ar[l]
\\
\\
	H_*^{\mathrm{dR}, \mathrm{lf}, \mathrm{per}}(M)
	&
		H_*^{\mathrm{lf}, \mathrm{per}}(M)
		\ar[l]
\end{tikzcd}
\end{equation}

Let $M_N$ be any compactification of $M$ by a corona $N \coloneqq M_N \setminus M$. 
As discussed in Section~\ref{sec_index_pairing} that analytic $K$-homology satisfies strong excision, and therefore we have an isomorphism $K_*^\an(M_N,N) \cong K_*^{\an,\lf}(M)$, the boundary map in the long exact sequence in analytic $K$-homology for the pair $(M_N,N)$ identifies with a map $\partial\colon K_*^{\an,\lf}(M) \to \tilde{K}_{*-1}^\an(N)$.
Under the assumption that $(M_N,N)$ is a finite CW-pair we can then switch pictures and get a boundary map $\partial\colon K_*^{\lf}(M) \to \tilde{K}_{*-1}(N)$. 
 Now we use the fact that the Chern character is a natural transformation of cohomology theories to get a commutative diagram
\begin{equation}
\label{BigDiagram_rightpartsmall}
\begin{tikzcd}
		K^{\mathrm{lf}}_*(M)
		\ar[r, "\partial"]
		\ar[dd, "\ch"]
		&[-0.3cm]
		\tilde{K}_{*-1}(N)
		\ar[dd, "\ch"]
\\
\\
		H_*^{\mathrm{lf}, \mathrm{per}}(M)
		\ar[r, "\partial"]
		&
			\tilde{H}_{*-1}^{\mathrm{per}}(N),
\end{tikzcd}
\end{equation}
where the bottom map is the boundary map in periodized homology.

\medskip

Combining all the above results in the following:

\begin{prop}\label{prop_different_cherns}
Let $M$ be a smooth manifold and assume that it is the interior of a finite CW-pair $(M_N,N)$. Then we have a commutative diagram
\begin{equation*}
\label{BigDiagram_rightpart}
\begin{tikzcd}
		K^{\mathrm{an}, \mathrm{lf}}_*(M)
		\ar[dd, "\substack{\textup{Connes--} \\ \textup{Chern} \\ \textup{character}}"']
		&[-0.3cm]
		K^{\mathrm{lf}}_*(M)
		\ar[r, "\partial"]
		\ar[dd, "\ch"]
		\ar[l, "\cong"']
		&[-0.3cm]
		\tilde{K}_{*-1}(N)
		\ar[dd, "\ch"]
\\
\\
	H_*^{\mathrm{dR}, \mathrm{lf}, \mathrm{per}}(M)
	&
		H_*^{\mathrm{lf}, \mathrm{per}}(M)
		\ar[r, "\partial"]
		\ar[l, "\cong"']
		&
			\tilde{H}_{*-1}^{\mathrm{per}}(N).
\end{tikzcd}
\end{equation*}
\end{prop}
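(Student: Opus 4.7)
The plan is simply to assemble the two commutative squares that have already been produced in the preceding discussion, namely diagram \eqref{eq_comp_Chern} (the left square) and diagram \eqref{BigDiagram_rightpartsmall} (the right square), and to verify that the horizontal isomorphisms in the bottom row and in the middle of the top row make sense under the CW-pair hypothesis. Concretely, I would first invoke the Baum--Higson--Schick theorem \cite{baum_higson_schick} to identify $K_*^{\an,\lf}(M) \cong K_*^{\lf}(M)$: this uses strong excision plus the assumption that $(M_N,N)$ is a finite CW-pair, since both sides arise as the relative $K$-homology groups of $(M_N,N)$ in the analytic and topological pictures, respectively. The bottom left isomorphism $H^{\dR,\lf,\per}_*(M) \cong H^{\lf,\per}_*(M)$ is the classical de Rham theorem (applied in the locally finite setting on a smooth manifold, which needs no further CW-hypothesis).

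Next I would paste in the left square: the commutativity of
\[
\begin{tikzcd}
K^{\an,\lf}_*(M) \ar[d, "\text{Connes--Chern}"'] & K^{\lf}_*(M) \ar[l] \ar[d, "\ch"] \\
H^{\dR,\lf,\per}_*(M) & H^{\lf,\per}_*(M) \ar[l]
\end{tikzcd}
\]
is exactly diagram \eqref{eq_comp_Chern}, which was justified via the Connes--Moscovici localized index formula together with the fact that $K$-homology of $M$ is rationally generated by Dirac operators. Then I would paste in the right square: the commutativity of the square built from the boundary maps $\partial \colon K^{\lf}_*(M) \to \tilde{K}_{*-1}(N)$ and $\partial \colon H^{\lf,\per}_*(M) \to \tilde{H}^{\per}_{*-1}(N)$ is diagram \eqref{BigDiagram_rightpartsmall}, and this square commutes by the naturality of the Chern character with respect to the connecting homomorphism in the long exact sequence of the pair $(M_N,N)$, i.e.\ by the fact that the Chern character is a natural transformation of generalized (co)homology theories.

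Finally I would remark that the compatibility of the boundary maps in the analytic and topological pictures is built into the Baum--Higson--Schick identification: the isomorphism $K_*^{\an}(M_N,N) \cong K_*(M_N,N)$ is compatible with the long exact sequences of the pair, so the boundary $\partial$ on analytic $K$-homology and the one on topological $K$-homology agree after the identifications. Splicing the two squares along the middle column yields the asserted diagram.

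The main obstacle here is conceptual rather than computational: each of the two squares individually rests on a nontrivial theorem (Connes--Moscovici for the left, naturality for generalized homology theories for the right), but these were already invoked in the preceding subsection, so at the level of this proposition the task reduces to a careful bookkeeping of the identifications. The only genuine hypothesis that is used is the finiteness of the CW-pair $(M_N,N)$, which is precisely what is needed to switch between the analytic, topological, and (implicitly) geometric pictures of $K$-homology of $N$ and of $M$.
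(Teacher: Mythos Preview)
Your proposal is correct and follows essentially the same approach as the paper: the paper's proof consists of the single sentence ``Combining all the above results in the following'', and you have spelled out exactly which results are being combined (diagrams \eqref{eq_comp_Chern} and \eqref{BigDiagram_rightpartsmall}) and why the identifications in the middle column are valid under the finite CW-pair hypothesis. Your write-up is more explicit about the bookkeeping than the paper itself, but there is no substantive difference in the argument.
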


\section{Main argument}

In this section, we explain the main diagram used for the proof.

\subsection{Transgression and the main diagram}
\label{sec_transgress_main_diag}

We start our discussion here with the assumption that $M$ is a general proper metric space.

The Higson corona supports transgression maps
\begin{equation}
\label{TransgressionMapsHigsonCorona}
T \colon \tilde{H}^{*-1}(\partial_h M) \longrightarrow \HX^{*}(M), \qquad T \colon \HX_{*}(M) \longrightarrow \tilde{H}_{*-1}(\partial_h M),
\end{equation}
relating Alexander--Spanier (co)homology of the Higson corona $\partial_h M$ with the coarse (co)homology of $M$, see \cite[Section~\S4.3]{engel_wulff}; we remark here that it is important to use Alexander--Spanier (co)homology as $\partial_h M$ is usually not homotopy equivalent to a CW-complex.

The idea to define these transgression maps is as follows: Let us first note that $\HX^*(M) \cong H^*(\cP(M))$ and $\HX_*(M) \cong H_*^\lf(\cP(M))$, where $\cP(M)$ is the total Rips complex of $M$ considered as a $\sigma$-space, which means that we remember the sequence of Rips complexes $P_R(M)$ for $R \in \IN$ assembling to $\cP(M)$, and the functors $H^*(-)$ and $H_*^\lf(-)$ are correspondingly extended to $\sigma$-spaces. 
Because Alexander--Spanier (co)homology satisfies strong excision, we have $H^*(\cP(M)) \cong H^*(\overline{\cP(M)},\partial_h M)$ and $H_*^\lf(\cP(M)) \cong H_*(\overline{\cP(M)},\partial_h M)$, where $\overline{\cP(M)}$ denotes the Higson compactification of $\cP(M)$.\footnote{This is abuse of terminology: The space $\cP(M)$ is usually not locally compact and hence $\overline{\cP(M)} = \cP(M) \sqcup \partial_h M$ is not compact.}
The transgression maps are then the boundary maps in the long exact sequences of the pair $(\overline{\cP(M)},\partial_h M)$ together with the observation that they factor through (resp. have image contained in) the corresponding reduced groups.

The canonical map $M \to \cP(M)$ yields a canonical comparison map
\[
c \colon H^{\lf}_*(M) \longrightarrow \HX_*(M).
\] 
Naturality of boundary maps shows that the transgression map fits into the commutative diagram
\begin{equation}
\begin{tikzcd}
\label{BoundaryComparisonMap}
H^{\lf}_*(M) 
\ar[d, "\partial"']
\ar[r, "c"]
& \HX_*(M)
\ar[d, "T"]
\\
\tilde{H}_{*-1}(\partial_h M) 
\ar[r, equal]
&
\tilde{H}_{*-1}(\partial_h M),
\end{tikzcd}
\end{equation}
where the left hand side is the boundary map for the pair $(\overline{M}, \partial_h M)$ with $\overline{M}$ being the Higson compactification of $M$.

The transgression maps satisfy the duality
\[
\langle Tx, \xi\rangle = \langle x, T\xi\rangle
\]
for chains $x \in \HX_p(M)$ and cochains $\xi \in \tilde{H}^{p-1}(\partial_hM)$, where the left hand side denotes the pairing in Alexander--Spanier (co)homology of $\partial_hM$ and the right hand side denotes the pairing in coarse (co)homology of $M$.
Using this, we may move the transgression on the left hand side of \eqref{PairingEquation} to obtain
\begin{equation}
\label{ShiftTransgression}
\langle \chi \ch(\xi), T \ch(x) \rangle = \langle T\chi \ch(\xi), \ch(x) \rangle .
\end{equation}

\medskip

If now $N$ is a Higson dominated corona, there are transgression maps (which we denote by $T_N$) for $N$ that factor through \eqref{TransgressionMapsHigsonCorona} and analogously to the diagram \eqref{BoundaryComparisonMap}, we have a commutative diagram
\begin{equation}
\begin{tikzcd}
\label{BoundaryComparisonMap_N}
H^{\lf}_*(M) 
\ar[d, "\partial"']
\ar[r, "c"]
& \HX_*(M)
\ar[d, "T_N"]
\\
\tilde{H}_{*-1}(N) 
\ar[r, equal]
&
\tilde{H}_{*-1}(N),
\end{tikzcd}
\end{equation}

Provided $(M_N, N)$ is a finite CW-pair (so that we have the isomorphism in the upper right corner of \eqref{SmallerDiagram}), we may now ask whether the diagram
\begin{equation}
\label{SmallerDiagram}
\begin{tikzcd}[column sep=3cm]
	K^{\mathrm{top}}_*(C^*M)
	\ar[r, "\tau_N"]
		&
		\tilde{K}_{*-1}^\an(N)
\\
	K^{\mathrm{alg}}_*(\mathcal{B}M)
	\ar[u, "\iota_*"]
	\ar[d, "\ch"']
		&
		\tilde{K}_{*-1}(N)
		\ar[dd, "\ch"]
		\ar[u, "\cong"]
\\
	\HP_*(\mathcal{B}M)
	\ar[d, "\chi"']
\\
	\HX^{\mathrm{per}}_*(M)
	\ar[r, "T_N"']
	&
		\tilde{H}_{*-1}^{\mathrm{per}}(N)
\end{tikzcd}
\end{equation}
commutes, i.e., if the two maps from $K^{\mathrm{alg}}_*(\mathcal{B}M)$ to $\tilde{H}_{*-1}^{\mathrm{per}}(N)$ that we see in the diagram coincide.

\begin{lem}
Suppose that $(M_N, N)$ is a finite CW-pair and $N$ is Higson dominated. 

Then if the diagram \eqref{SmallerDiagram} commutes, \eqref{PairingEquation} holds for each class $\xi$ in $K^{\mathrm{alg}}_*(\cB M)$ and each class $x$ of the form $x = p^*x'$ for some class $x' \in \tilde{K}^{*-1}(N)$ with $p\colon \partial_h M \to N$ the domination map.
\end{lem}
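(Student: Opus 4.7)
The plan is to rewrite both sides of \eqref{PairingEquation} so that each becomes an instance of the pairing between $\tilde{K}_{*-1}(N)$ and $\tilde{K}^{*-1}(N)$, transported to periodic (co)homology via the Chern characters, and then invoke the assumed commutativity of \eqref{SmallerDiagram}.

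For the right hand side, I would apply \cref{LemmaIndexPairing} to $x = p^* x'$ to obtain
\[
\langle \iota_* \xi, x\rangle = \bigl\langle \tau_N(\iota_* \xi), x'\bigr\rangle,
\]
the right hand side being the index pairing $\tilde{K}^{\an}_{*-1}(N) \times \tilde{K}^{*-1}(N) \to \Z$. The finite CW-pair hypothesis on $(M_N, N)$ identifies $\tilde{K}_{*-1}^{\an}(N)$ with $\tilde{K}_{*-1}(N)$, as used in \eqref{SmallerDiagram}. Since $N$ is then in particular a finite CW-complex, naturality of the topological Chern character together with the cap-product description of the pairing yields
\[
\bigl\langle \tau_N(\iota_* \xi), x'\bigr\rangle = \bigl\langle \ch\bigl(\tau_N(\iota_* \xi)\bigr), \ch(x')\bigr\rangle,
\]
with the right hand side now being the pairing of $\tilde{H}^{\per}_{*-1}(N)$ with the periodic cohomology of $N$.

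For the left hand side, I would use that $T_N$ is by construction the composition of the Higson-corona transgression $T$ with $p^*$, so that $T\ch(x) = T\, p^* \ch(x') = T_N \ch(x')$. Combining this with the duality \eqref{ShiftTransgression} in its version for $T_N$ gives
\[
\langle \chi \ch(\xi), T \ch(x)\rangle = \langle T_N \chi \ch(\xi), \ch(x')\rangle.
\]
At this point the commutativity of \eqref{SmallerDiagram} says precisely that $T_N \chi \ch(\xi)$ and $\ch(\tau_N(\iota_* \xi))$ coincide in $\tilde{H}^{\per}_{*-1}(N)$, so the two computations match and \eqref{PairingEquation} follows.

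I do not expect a substantive obstacle: the lemma is essentially a bookkeeping statement packaging the two reductions above, and all the real content is absorbed into the hypothesis on \eqref{SmallerDiagram}. The items to verify carefully are the factorization $T = T_N \circ p^*$ on classes pulled back from $N$ together with the duality of $T_N$, both of which are recorded in \cite{engel_wulff}, and the rational compatibility of the topological Chern character with the $K$-theoretic pairing on the finite CW-complex $N$; the latter is the single place where the finite CW-pair hypothesis is genuinely used on this side of the argument.
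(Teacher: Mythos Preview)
Your proposal is correct and follows essentially the same route as the paper's proof: both rewrite the right hand side via \cref{LemmaIndexPairing}, move the Chern character across the $K$-theoretic pairing on the finite complex $N$, rewrite the left hand side via transgression duality and the factorization $T\ch(x)=T_N\ch(x')$, and then invoke the assumed commutativity of \eqref{SmallerDiagram}. Your write-up is in fact slightly more explicit than the paper's about the factorization step and about where the finite CW hypothesis is actually used.
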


\begin{proof}
By \cref{LemmaIndexPairing}, if $x \in \tilde{K}^{p-1}(\partial_h M)$ is the pullback of a class $x' \in \tilde{K}^{p-1}(N)$ under $p\colon \partial_h M \to N$, the right hand side of \eqref{PairingEquation} may be reformulated as
\begin{equation}
\label{RewritingRightHandSide}
\langle \iota_* \xi, x \rangle = \big\langle \tau_N(\iota_*\xi), x' \rangle,
\end{equation}
where the right hand side is the canonical pairing between (reduced) $K$-theory and $K$-homology of $N$. 
With a view on \eqref{RewritingRightHandSide}, the question becomes under which assumptions we can prove the equality of 
\begin{equation}
\label{PairingEquation2}
\big\langle \tau_N(\iota_*\xi), x' \rangle ~\stackrel{!}{=}~ \big\langle T_N\chi \ch(\xi), \ch(x')\big\rangle = \big\langle \ch\big(\tau_N(\iota_*\xi)\big), \ch(x')\big\rangle,
\end{equation}
where in the second step, we used commutativity of \eqref{SmallerDiagram}.
We may now ``cancel'' the Chern characters on the right hand side of \eqref{PairingEquation2}, which means to exploit that the pairing of a $K$-homology class with a $K$-theory class is the same as the pairing of the corresponding classes in (co-)homology obtained by applying the Chern characters, to obtain the left hand side, thus showing equality in \eqref{PairingEquation2} and \eqref{PairingEquation}.
\end{proof}

\subsection{Reduction to $1$-summable Fredholm modules}
\label{sec_reduction_elliptic}

We will not be able to establish the commutativity of \eqref{SmallerDiagram} unconditionally.
Our discussion is based on the following inflation of \eqref{SmallerDiagram}, where we have to assume that $N$ is Higson dominated, and $(M_N, N)$ is a finite CW-pair and $M$ is a smooth manifold (since we will use the discussion in Section \ref{sec_chern_boundary}):
\begin{equation}
\label{BigDiagram}
\begin{tikzcd}
&[-0.1cm] &[-1.1cm] &[-0.4cm] &[-0.4cm] \tilde{K}_{*-1}^\an(N)
\\
	K^{\mathrm{top}}_*(C^*M)
	\ar[rrrru, bend left=10, "\tau_N"]
	&
	&
		K^{\mathrm{an}, \mathrm{lf}}_*(M)
		\ar[ll, "A"']
		\ar[ddd, "\substack{\text{Connes--} \\ \text{Chern} \\ \text{character}}"]
		&
		K^{\mathrm{lf}}_*(M)
		\ar[r, "\partial"]
		\ar[ddd, "\ch"]
		\ar[l, "\cong"']
		&
		\tilde{K}_{*-1}(N)
		\ar[ddd, "\ch"]
		\ar[u, "\cong"']
\\
	K^{\mathrm{alg}}_*(\mathcal{B}M)
	\ar[u, "\iota_*"]
	\ar[d, "\ch"']
	&
		\left\{\substack{\text{1-summable}\\ 
	\text{even/odd}\\\text{Fredholm} \\  \text{modules}}\right\}
	\ar[ur, "{[-]}"']
	\ar[l, "A^{\mathrm{alg}}"']
\\
	\HP_*(\mathcal{B}M)
	\ar[d, "\chi"']
\\
	\HX^{\mathrm{per}}_*(M)
	\ar[rrrr, bend right=15, "T_N"]
	&
	&
	H_*^{\mathrm{dR}, \mathrm{lf}, \mathrm{per}}(M)
			\ar[ll, "c"']
	&
		H_*^{\mathrm{lf}, \mathrm{per}}(M)
		\ar[r, "\partial"]
		\ar[l, "\cong"']
		&
			\tilde{H}_{*-1}^{\mathrm{per}}(N)
\end{tikzcd}
\end{equation}
Here we consider the set of $1$-summable Fredholm modules, which on the one hand define elements in the analytic $K$-homology group $K_*^{\mathrm{an}, \mathrm{lf}}(M)$.
On the other hand, we will show that (in the case that $* \in \{0, 1\}$) there is an ``algebraic assembly map'' that yields elements in $K_*^{\mathrm{alg}}(\cB M)$, in such a way that both subdiagrams of the large left rectangle commute. 

Commutativity of the diagram involving $\tau_N$ is Lemma~\ref{lem_boundary_factors1} (together with a change of pictures for $K$-homology) and commutativity of the right two squares is Proposition~\ref{prop_different_cherns}. 
The diagram involving $T_N$ is \eqref{BoundaryComparisonMap_N}.
Regarding the left square,
the following result will be proved in Sections~\ref{sec_alg_assembly} and \ref{SectionConnesMoscovici}:

\begin{prop}\label{prop_both_subdiag_commute}
Both subdiagrams of the left square of \eqref{BigDiagram} commute.
\end{prop}

Notice, however, that the arrows in \eqref{BigDiagram} are oriented in such a way that the commutativity of all subdiagrams above does \emph{not} yield the commutativity of \eqref{SmallerDiagram}.
However, we may record the following consequence of Proposition \ref{prop_both_subdiag_commute}:

\begin{thm}
\label{ThmMain1}
Suppose that $N$ is Higson dominated, that $(M_N, N)$ is a finite CW-pair and that $M$ is a smooth manifold. 

Then Diagram~\eqref{SmallerDiagram} commutes on those classes in $K_*^{\mathrm{alg}}(\cB M)$ that lie in the image of the algebraic assembly map (i.e., come from a $1$-summable Fredholm module).
Consequently, \eqref{PairingEquation} holds for such classes and any class $x$ that is a pullback from $N$.
\end{thm}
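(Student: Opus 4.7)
The plan is to deduce commutativity of \eqref{SmallerDiagram} on classes of the form $\xi = A^{\alg}(F)$ by a diagram chase in \eqref{BigDiagram}: although the arrows in \eqref{BigDiagram} are oriented in a way that prevents a formal concatenation of its subdiagrams into \eqref{SmallerDiagram}, any class that admits a lift to a summable Fredholm module $F$ can be transported along both paths through that lift, and each relevant sub-diagram then commutes in the required direction. The consequence for \eqref{PairingEquation} will then follow from the same pairing argument used in the preceding lemma, applied pointwise on the subset of classes in the image of $A^{\alg}$.

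First, I would trace $\xi = A^{\alg}(F)$ along the top-right path of \eqref{SmallerDiagram}. By commutativity of the left triangle in \eqref{BigDiagram}, i.e., the Proposition stated just above (whose proof is deferred to Sections~\ref{sec_alg_assembly} and \ref{SectionConnesMoscovici}), one has $\iota_*\xi = A([F])$, where $[F] \in K^{\an,\lf}_*(M)$ is the analytic $K$-homology class of $F$. Lemma~\ref{lem_boundary_factors1} then gives $\tau_N(\iota_*\xi) = \partial[F]$ in $\tilde{K}^{\an}_{*-1}(N)$. Passing to the chosen picture of $K$-homology via the isomorphism $K^{\an,\lf}_*(M) \cong K^{\lf}_*(M)$, which is available by the finite CW-pair hypothesis, and invoking the right-hand square of Proposition~\ref{prop_different_cherns}, the top-right path evaluates in $\tilde{H}^{\per}_{*-1}(N)$ to $\partial \ch([F]_{\mathrm{top}})$, where $[F]_{\mathrm{top}}$ denotes the image of $[F]$ in $K^{\lf}_*(M)$.

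Next, I would trace $\xi$ along the down-bottom path. By commutativity of the left square in \eqref{BigDiagram}, again the Proposition, the element $\chi\ch(\xi)$ equals the image of the Connes--Chern character of $[F]$ under $c \colon H^{\dR,\lf,\per}_*(M) \to \HX^{\per}_*(M)$. Diagram \eqref{BoundaryComparisonMap_N}, together with the de Rham isomorphism $H^{\lf,\per}_*(M) \cong H^{\dR,\lf,\per}_*(M)$ available since $M$ is a smooth manifold, yields $T_N \circ c = \partial$ on that group. Combining with the left-hand square of Proposition~\ref{prop_different_cherns}, which identifies the Connes--Chern character of $[F]$ with $\ch([F]_{\mathrm{top}})$ under the de Rham isomorphism, produces once more $\partial \ch([F]_{\mathrm{top}})$. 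Both paths therefore agree on $\xi$, establishing commutativity of \eqref{SmallerDiagram} on the image of the algebraic assembly map, and hence \eqref{PairingEquation} for such classes and any $x$ pulled back from $N$.

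The main obstacle, in my view, is not in the chase itself but in its setup: every step above is underwritten by the commutativity of the left triangle and left square of \eqref{BigDiagram}, i.e., the Proposition preceding the theorem, whose verification constitutes the real technical heart of the argument and is deferred to later sections. All other ingredients (Lemma~\ref{lem_boundary_factors1}, Proposition~\ref{prop_different_cherns}, and diagram \eqref{BoundaryComparisonMap_N}) are already in hand, so once that Proposition is available, the theorem reduces to the three-step diagram chase described above.
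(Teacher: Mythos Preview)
Your proposal is correct and follows essentially the same approach as the paper. The paper does not spell out a separate proof for Theorem~\ref{ThmMain1}; it simply records the theorem as an immediate consequence of the commutativity of all subdiagrams of \eqref{BigDiagram}, and the exact diagram chase you describe is the one displayed step-by-step in the paper's proof of Theorem~\ref{ThmMainThm2} (specialized to the case where $\xi$ already equals $A^{\alg}(F)$, so the preliminary $\KH$-step is unnecessary).
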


Note that it is not clear (and we actually do not really expect it to be true) that \emph{all} classes in $K_*^{\mathrm{alg}}(\cB M)$ lie in the image of the algebraic assembly map $A^{\mathrm{alg}}$.

To move on with our argument, we consider the extended diagram
\begin{equation}
\label{BigDiagram2}
\begin{tikzcd}
&
	K^{\mathrm{top}}_*(C^*M)
	&
	&
		K^{\mathrm{an},\mathrm{lf}}_*(M)
		\ar[ll, "A"']
		\ar[ddd, "\substack{\text{Connes--} \\ \text{Chern} \\ \text{character}}"]
\\
\KH_*(\mathcal{B}M)
\ar[ur, "\eta_{\toprm}"]
\ar[dr]
	&
	K^{\mathrm{alg}}_*(\mathcal{B}M)
	\ar[u, "\iota_*"]
	\ar[l, "\eta_{\KH}"']
	\ar[d, "\ch"']
	&
	\left\{\substack{\text{1-summable}\\ 
	\text{even/odd}\\\text{Fredholm} \\  \text{modules}}\right\}
	\ar[ur, "{[-]}"']
	\ar[l, "A^{\mathrm{alg}}"']
\\
&
	\HP_*(\mathcal{B}M)
	\ar[d, "\chi"']
\\
&
	\HX^{\mathrm{per}}_*(M)
	&
	&
		H_*^{\dR, \mathrm{lf}, \mathrm{per}}(M),
		\ar[ll, "c"']
\end{tikzcd}
\end{equation}
where $\KH_*(\cB M)$ denotes Weibel's homotopy $K$-theory of the algebra $\mathcal{B} M$.
For our purposes, the important property of it is that both the comparison map $\iota_*$ and the Chern character $\ch$ factor through it.
An immediate consequence is the following:

\begin{thm}
\label{ThmMainThm2}
Suppose that $N$ is Higson dominated, $(M_N, N)$ is a finite CW-pair and that $M$ is a smooth manifold.
Assume moreover that
\begin{enumerate}
\item[{\normalfont (i)}] The assembly map $A \colon K^{\mathrm{an}, \mathrm{lf}}(M) \to K^{\mathrm{top}}_*(C^*M)$ is surjective;
\item[{\normalfont (ii)}] The comparison map $\eta_\toprm\colon \KH_*(\cB M) \to K^{\mathrm{top}}_*(C^* M)$ is injective.
\end{enumerate}
Then the diagram \eqref{SmallerDiagram} commutes for $* = \{0, 1\}$.
Consequently, \eqref{PairingEquation} holds for any class $\xi$ in $K^{\mathrm{alg}}_*(\cB M)$ and any class $x$ that is a pullback from $N$.
\end{thm}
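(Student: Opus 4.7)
The strategy is to use the two standing assumptions to reduce an arbitrary class in $K^{\alg}_*(\cB M)$ to a class in the image of the algebraic assembly map $A^{\alg}$, where Theorem~\ref{ThmMain1} already handles commutativity of \eqref{SmallerDiagram}.

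First I would observe that both paths from $K^{\alg}_*(\cB M)$ to $\tilde{H}^{\per}_{*-1}(N)$ in \eqref{SmallerDiagram} factor through $\eta_{\KH}\colon K^{\alg}_*(\cB M) \to \KH_*(\cB M)$. Indeed, $\iota_*$ factors by definition as $\iota_* = \eta_{\toprm}\circ \eta_{\KH}$ (the commutative triangle in \eqref{BigDiagram2}), and the algebraic Chern character $\ch\colon K^{\alg}_*(\cB M) \to \HP_*(\cB M)$ factors through $\KH$ by polynomial homotopy invariance of periodic cyclic homology. Consequently, to establish commutativity of \eqref{SmallerDiagram} at a given $\xi \in K^{\alg}_*(\cB M)$ it suffices to replace $\xi$ by any other class $\xi' \in K^{\alg}_*(\cB M)$ with $\eta_{\KH}(\xi') = \eta_{\KH}(\xi)$.

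Next I would construct such a replacement $\xi'$ lying in the image of $A^{\alg}$. By assumption~(i), there is $y \in K^{\an,\lf}_*(M)$ with $A(y) = \iota_*(\xi)$. Since the target $\tilde{H}^{\per}_{*-1}(N)$ is a $\Q$-vector space, the agreement of two $\Z$-linear maps into it can be checked rationally, so it is enough to treat the case when $y$ is (a $\Q$-linear combination of) classes $[F]$ of summable Fredholm modules; on the smooth manifold $M$, this is automatic because $K^{\an,\lf}_*(M) \otimes \Q$ is generated by classes of twisted Dirac operators, each of which defines a summable Fredholm module. Set $\xi' \coloneqq A^{\alg}([F])$. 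The commutativity of the upper-left square of \eqref{BigDiagram2} then gives
\[
\iota_*(\xi') \;=\; A([F]) \;=\; A(y) \;=\; \iota_*(\xi).
\]
Writing $\iota_* = \eta_{\toprm}\circ \eta_{\KH}$ and invoking assumption~(ii) (injectivity of $\eta_{\toprm}$), this forces $\eta_{\KH}(\xi') = \eta_{\KH}(\xi)$ in $\KH_*(\cB M)$.

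Finally, Theorem~\ref{ThmMain1} applies to $\xi' = A^{\alg}([F])$ and yields commutativity of \eqref{SmallerDiagram} at $\xi'$; by the factorization through $\eta_{\KH}$ from the first step, this transfers to $\xi$. The identity \eqref{PairingEquation} for arbitrary $\xi \in K^{\alg}_*(\cB M)$ and any $x$ pulled back from $N$ then follows from commutativity of \eqref{SmallerDiagram} exactly as in the lemma at the end of Section~\ref{sec_transgress_main_diag}. The main technical point — and the only step truly requiring smoothness of $M$ beyond what already entered Theorem~A — is the rational representability of classes in $K^{\an,\lf}_*(M)$ by summable Fredholm modules, which is what allows the surjectivity assumption~(i) to be combined with the algebraic assembly map of Section~\ref{sec_alg_assembly}.
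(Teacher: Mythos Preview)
Your proof is correct and follows essentially the same route as the paper's: produce a summable Fredholm module $[F]$ with $A([F]) = \iota_*(\xi)$ via assumption~(i) and rational representability, set $\xi' = A^{\alg}([F])$, use assumption~(ii) to deduce $\eta_{\KH}(\xi') = \eta_{\KH}(\xi)$, and conclude via the factorization of both paths through $\KH$. The only cosmetic difference is that the paper unfolds the final step as an explicit diagram chase through \eqref{BigDiagram} rather than invoking Theorem~\ref{ThmMain1}, and you are slightly more explicit than the paper about why passing to rational coefficients is harmless (since $\tilde{H}^{\per}_{*-1}(N)$ is a $\Q$-vector space).
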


\begin{proof}
Let $\xi \in K^\alg_*(\cB M)$.
By surjectivity of $A$, there exists a class $[T] \in K_*^{\mathrm{an}, \mathrm{lf}}(M)$ such that
\begin{equation}\label{eq_main_diag_1}
A([T]) = \iota_* \xi.
\end{equation}
Since (rationally) every locally finite $K$-homology class of $M$ may be represented by a summable Fredholm module,\footnote{This is because every locally finite $K$-homology class may be represented by the class of a graded, generalized Dirac operator and they define summable classes. The argument in the compact case is outlined in, e.g., \cite[last page of Appendix C]{lawson_michelsohn}. This argument extends to the locally finite case.\label{footnote_summable}} we may assume that $T$ is summable.\footnote{Unfortunately, there is a discrepancy here between the two occurrences of the word ``summable'': The representation result of $K$-homology classes allows us only to represent classes by $p$-summable modules with $p$ depending on the dimension of the manifold. But we define the algebraic assembly map $A^\alg$ only for $1$-summable modules. To resolve this issue, we observe that $A^\alg$ may also be defined on $p$-summable modules and maps them then to $K_*^\alg(\cB^p M)$, where $\cB^p M$ is the algebra of finite propagation, locally $p$-summable operators. Then we just exploit that the inclusion $\cB M \to \cB^p M$ induces an isomorphism on $\KH$-theory, see Remark~\ref{rem_nilinvariance}.}
Notice that we do \emph{not} know if $\xi = A^{\mathrm{alg}}(T)$, but from \eqref{eq_main_diag_1} and commutativity of \eqref{BigDiagram2} we get $\iota_*(A^\alg(T)) = \iota_*(\xi)$. 
Replacing $\iota_*$ by the composition $\eta_\toprm \circ \eta_{\KH}$ and using Assumption (ii) we get $\eta_{\KH}(A^\alg(T)) = \eta_{\KH}(\xi)$, which finally implies
\[
\ch(\xi) = \ch\big(A^{\mathrm{alg}}(T)\big).
\]
Hence from commutativity of the subdiagrams of \eqref{BigDiagram} we get 
\begin{align*}
T_N \chi\ch(\xi) 
&= T_N \chi\ch\big(A^{\mathrm{alg}}(T)\big)
\\
&= T_N c\big(\ch([T])\big) 
\\
&= \partial\big(\ch([T])\big)
\\
&= \ch(\partial [D])
\\
&= \ch\big(\tau_N(A [T])\big)
\\
&= \ch\big(\tau_N(\iota_* \xi)\big),
\end{align*}
which is exactly commutativity of diagram \eqref{SmallerDiagram}.
\end{proof}

A case where both assumptions of the above theorem are satisfied is the following.
It is well-known that the coarse assembly map is an isomorphism for $M = \R^n$, and a recent result of Bunke--Engel \cite{coarse_KH} implies that the comparison map $\KH_*(\cB M) \to K^{\mathrm{top}}_*(C^*M)$ is also an isomorphism for $M = \R^n$.
Briefly put, the reason for this latter claim is that both sides are coarse homology theories, so that the comparison map begin an isomorphism follows from the fact that it is an isomorphism on a point by results of Corti\~nas--Thom \cite{CT}. 

\begin{cor}
The assumptions of the Theorem~\ref{ThmMainThm2} are satisfied when $M = \R^n$ and $N$ is a Higson dominated, finite complex such that $(\R^n \sqcup N, N)$ is a finite CW-pair.
\end{cor}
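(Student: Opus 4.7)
The plan is to verify, one by one, the two hypotheses of Theorem~\ref{ThmMainThm2} in the case $M = \R^n$; once both hold, the corollary follows by direct application of that theorem. Each hypothesis is essentially already available in the literature, so the work consists in identifying and citing the right results.

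First, for hypothesis~(i), I would invoke the classical fact that the coarse Baum--Connes assembly map $A \colon K_*^{\mathrm{an}, \mathrm{lf}}(\R^n) \to K^{\toprm}_*(C^* \R^n)$ is an isomorphism; in particular surjective. This is proved inductively by a coarse Mayer--Vietoris decomposition of $\R^n$ into two half-spaces (which are flasque), reducing to the known case of a point, cf.\ \cite[Chapter~12]{higson_roe}.

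Next, for hypothesis~(ii), I would appeal to the recent theorem of Bunke--Engel \cite{coarse_KH} stating that $\eta_\toprm \colon \KH_*(\cB M) \to K^{\toprm}_*(C^* M)$ is an isomorphism (and, a fortiori, injective) for $M = \R^n$. I would summarize, rather than reprove, the argument: both $M \mapsto \KH_*(\cB M)$ and $M \mapsto K^{\toprm}_*(C^* M)$ extend to coarse homology theories on an appropriate category of bornological coarse spaces, and $\eta_\toprm$ is a natural transformation between them. By Corti\~nas--Thom \cite{CT}, this natural transformation induces an isomorphism at the level of coefficients, i.e.\ on a point, and the general uniqueness principle for coarse homology theories then forces it to be an isomorphism on $\R^n$ as well.

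With (i) and (ii) verified, Theorem~\ref{ThmMainThm2} applies verbatim. The main (and really only substantive) obstacle is hypothesis~(ii): injectivity of $\eta_\toprm$ on $\KH_*(\cB \R^n)$ is not accessible by a direct computation of $\KH_*(\cB \R^n)$, so the entire weight of the proof is borne by the Bunke--Engel result \cite{coarse_KH}. Hypothesis~(i), by contrast, is standard and essentially bookkeeping once the right reference is chosen.
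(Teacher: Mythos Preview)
Your proposal is correct and follows essentially the same approach as the paper: the paper also verifies (i) by citing that the coarse assembly map is an isomorphism for $\R^n$, and verifies (ii) via the Bunke--Engel result \cite{coarse_KH}, with the same explanation that both sides are coarse homology theories and that Corti\~nas--Thom \cite{CT} gives the isomorphism on a point. If anything, your write-up is slightly more detailed than the paper's (which dispatches both points in a single sentence each), but the content is identical.
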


The above corollary is, even though $\IR^n$ seems to be an extremely special case of the very general Theorem \ref{ThmMainThm2}, a new result: We don't expect that every class in $K^{\mathrm{alg}}_*(\cB \IR^n)$ is representable by a graded generalized Dirac operator on $\IR^n$ and therefore Roe's original result does not cover all cases here.

\begin{rem}\label{rem_nilinvariance}
Theorem~\ref{ThmMainThm2} also holds more generally for the algebra $\cB^p M$ of finite propagation, locally $p$-summable operators. The main reason for this is that we have an isomorphism $\KH_*(\cB M) \cong \KH_*(\cB^p M)$ due to nil-invariance of $\KH$-theory: the quotient $\cB^p M / \cB M$ is a nilpotent algebra and hence $\KH_*(\cB^p M / \cB M) \cong 0$. Of course, the results of Connes--Moscovici and Moscovici--Wu that we use are also valid for $p$-summable Fredholm modules and not just $1$-summable ones.
\end{rem}

\section{Algebraic assembly}
\label{Assembly}

In this section, we define our algebraic assembly map, which allows to compare the algebraic Chern character with the Connes--Chern character.

\subsection{Definition}
\label{sec_alg_assembly}

Let $M$ be a proper metric space.
In this section, we define a map
\[
A^{\alg} \colon \left\{\substack{\text{1-summable}\\ 
	\text{even/odd}\\\text{Fredholm} \\  \text{modules}}\right\} \longrightarrow K^{\alg}_*(\cB M),
\]
for $* \in \{0, 1\}$, which makes the diagram
\begin{equation}
\label{DiagramSection6}
\begin{tikzcd}
	K^{\mathrm{top}}_*(C^*M)
	&
	&
		K^{\mathrm{an},\mathrm{lf}}_*(M)
		\ar[ll, "A"']
\\
	K^{\mathrm{alg}}_*(\mathcal{B}M)
	\ar[u, "\iota_*"]
	&
	\left\{\substack{\text{1-summable}\\ 
	\text{even/odd}\\\text{Fredholm} \\  \text{modules}}\right\}
	\ar[ur, "{[-]}"']
	\ar[l, "A^{\mathrm{alg}}"']
\end{tikzcd}
\end{equation}
commutative.
We do not attempt to show that $A^{\alg}$ respects any equivalence relations one might put on the set of $1$-summable Fredholm modules.
Throughout, we fix an ample\footnote{Recall that this means that the representation of $C_0(M)$ on $H$ is non-degenerate and that no non-zero function from $C_0(M)$ acts as a compact operator.} $M$-module $H$, on which the algebra $\cB M$ of finite propagation, locally trace-class operators acts, and we will use the notation $\cA M$ for the algebra of finite propagation operators on $M$. Note that $\cA M$ contains $\cB M$ as an ideal.

We fix a dense *-subalgebra $\cA \subset C_c(M)$ containing partitions of unity subordinate to any finite open cover, as well as square roots of these functions. 
If $M$ is a manifold, then $\cA = C_c^\infty(M)$ is a valid choice.

\paragraph{\underline{The even case.}}

An even, $1$-summable Fredholm module is a bounded, self-adjoint operator $T$ on $H \oplus H$ having the form
\[
T = \begin{pmatrix}
	0 & T_- \\ T_+ & 0
\end{pmatrix},
\]
such that for each $f \in \cA$ the operators $f(T^2 - 1)$, $(T^2 - 1)f$ and $[T, f]$ are trace-class.

We choose a uniformly bounded and locally finite open cover $(U_i)_{i \in I}$ of $M$ and let $(\chi_i)_{i \in I}$ be a subordinate partition of unity from $\cA$.
Then we set
\[
\tilde{T}_+ \coloneqq \sum_{i \in I} \chi_i^{1/2} T_+ \chi_i^{1/2}.
\] 
This operator has finite propagation by uniform boundedness of the cover and is invertible modulo $\cB M$, hence we obtain an element $[\tilde{T}_+]$ in $K_1^{\alg}(\cA M/\cB M)$. 
Then we set
\[
A^{\alg}(T) \coloneqq \partial([\tilde{T}_+]) \in K_0^{\alg}(\cB M),
\]
where $\partial$ is the boundary map in algebraic $K$-theory.\footnote{The ideal inclusion $\cB(M) \subset \cA(M)$ does probably not satisfy excision in algebraic $K$-theory, so the boundary map generally sends $K_j(\cA(M)/\cB(M))$ to the relative $K_{j-1}(\cA(M), \cB(M))$. 
However, in degree zero, one always has $K_0(\cA(M), \cB(M)) \cong K_0(\cB(M))$ 
\cite[Thm.~1.5.9]{rosenberg_algebraic_K}.
Moreover, there exists an explicit description of the boundary map from $K_1$ to $K_0$ analogous to the index map in topological $K$-theory, which lands directly in $K_0(\cB(M))$.}
Another choice of cover $(U_i)_{i \in I}$ and subordinate partition of unity $(\chi_i)_{i \in I}$ leads to an operator $\tilde{T}^\prime_+$ which differs from $\tilde{T}_+$ by a finite propagation, locally trace-class operator and hence defines the same element in $K_1^{\alg}(\cA M/\cB M)$. Consequently, $A^{\alg}(T)$ is well-defined.

The diagram \eqref{DiagramSection6} commutes, because the usual assembly map $A$ in topological $K$-theory is defined by exactly the same procedure.

\paragraph{\underline{The odd case.}}

An odd, $1$-summable Fredholm module is a bounded, self-adjoint operator $T$ on $H$ such that for each $f \in \cA$ the operators $f(T^2 - 1)$, $(T^2 - 1)f$ and $[T, f]$ are trace-class.

Let us first recall the definition of the usual assembly map $A$ in this case. First we replace $T$ by the finite propagation operator
\begin{equation}\label{eq_truncation}
\tilde{T} \coloneqq \sum_{i \in I} \chi_i^{1/2} T \chi_i^{1/2}
\end{equation}
as in the case of even Fredholm modules. Then $P = (1-\tilde{T})/2$ is a projection in $\cA M / \cB M$ and we obtain a class $[P] \in K_0^\alg(\cA M / \cB M)$. 
Then we define
\begin{equation}\label{eq_defnA_odd}
A(T) \coloneqq \partial\left[P\right] \in K_1^{\mathrm{top}}(C^*M),
\end{equation}
where we pass to the completions of the algebras and $\partial$ denotes the boundary map in the $6$-term exact sequence of $K$-theory of $C^*$-algebras.

Note that $\partial\left[P\right]$ is usually represented by $[\exp(2 \pi i \cdot P)]$, but we have to deviate from this to define $A^\alg$, because in general $\exp(2 \pi i \cdot P)$ has infinite propagation. But since homotopic invertibles over $C^*M$ define the same class in $K_1^{\mathrm{top}}(C^*M)$, we have some flexibility here to use a different function $\varphi$ instead of $\exp(2 \pi i \cdot -)$. The rough idea is as follows: If $\varphi$ does not take the value $0$, then we will get by functional calculus an invertible element $\varphi(P)$, and if $\varphi$ is homotopic to $\exp(2 \pi i \cdot -)$ through functions which all do not take the value $0$, then $\varphi(P)$ will represent the same class in $K_1^{\mathrm{top}}(C^*M)$ as $[\exp(2 \pi i \cdot P)]$. Taking care of the details in this argument, especially taking care of the non-unitality of $C^*M$, we arrive at the following:

We consider continuous functions $\varphi \colon I \to \C$, where $I \subset \IR$ is a sufficiently large interval containing $[0,1]$ and the spectrum of $P$, with
\begin{enumerate}
\item $\varphi(0) = \varphi(1) = 1$;
\item $\varphi$ does not have any zeros; and
\item the restriction of $\varphi$ to $[0, 1]$ has winding number $+1$ around $0$.\footnote{This is a convenient way of phrasing that $\varphi$ is homotopic to $\exp(2 \pi i \cdot -)$ through functions which all do not take the value $0$.}
\end{enumerate}
By (i) and (ii) we infer that $\varphi(P) - 1$ is in $C^* M$ and is invertible; we obtain an element $[\varphi(P)] \in K_1^\toprm(C^* M)$. We observe that the function $\varphi_0(\lambda) = \exp(2 \pi i \lambda)$ satisfies all these conditions and that any function $\varphi$ satisfying (i), (ii) and (iii) may by homotoped to $\varphi_0$ through functions also satisfying (i), (ii) and (iii).
Hence $\varphi(P)$ represents the class $A(T)$ for any $\varphi$ with these properties.

It is a consequence of the Weierstra{\ss} approximation theorem that there in fact exists a \emph{polynomial} function $\varphi$ satisfying (i), (ii) and (iii).
One explicit choice is
\[
\varphi(\lambda) = 1 + (\lambda^2 - \lambda)\psi(\lambda), \qquad \text{with}\qquad \psi(\lambda) = 2i\left(t - \tfrac{1}{2}\right) + 5.
\]
For such a function $\varphi$, the operator $\varphi(P)$ has finite propagation.
Since $P^2 - P \in \cB M$, we conclude that $\varphi(P) - 1  = (P^2 - P)\in \cB M$.
Hence we may set
\[
A^{\alg}(T) \coloneqq [\varphi(P)] \in K_1^{\alg}(\cB M),
\]
which agrees with $A(T)$ after passing to topological $K$-theory by the above discussion.

\begin{rem}
Two remarks are in order about the odd case.
\begin{enumerate}
\item Our construction of $A^\alg(T)$ depends on the choice of the polynomial $\varphi$, and it is not clear to us whether one can prove that it is independent from this choice.

But what we can prove is the following: If $\varphi$ and $\varphi^\prime$ are two choices of polynomials satisfying (i)--(iii), then by an argument using the Weierstra{\ss} approximation theorem they will be \emph{(piece-wise) polynomially} homotopic to each other through polynomials satisfying (i)--(iii). Hence $[\varphi(P)]$ and $[\varphi^\prime(P)]$ will coincide after applying the comparison map $K_1^\alg(\cB M) \to \KH_1(\cB M)$.

Since the algebraic Chern character $\ch : K^\alg_*(\cB M) \to \HP_*(\cB M)$ factors through $\KH_*(\cB M)$, any possible difference between $[\varphi(P)]$ and $[\varphi^\prime(P)]$ disappears after applying the Chern character.

\item When we constructed $A^\alg(T)$ we first discussed a class $[P] \in K^\alg_0(\cA M / \cB M)$ which was used in \eqref{eq_defnA_odd}. One can of course apply the boundary map in algebraic $K$-theory to obtain a class
\[
\partial [P] \in K_{-1}^\alg(\cB M)
\]
which is independent of any choices of polynomials satisfying (i)--(iii).

The relation of this class to $A^\alg(T) \in K_1^\alg(\cB M)$ should be as follows: Using results of Corti\~{n}as--Thom \cite{CT} one can prove that $\KH_*(\cB M)$ satisfies Bott periodicity \cite{coarse_KH}; especially we get $\KH_{-1}(\cB M) \cong \KH_1(\cB M)$ via multiplication with the Bott element. 
We believe that $\partial [P]$ and $A^\alg(T)$ coincide after passing to $\KH$-theory and applying this isomorphism but decided to not carry out this argument here since we solely work with $A^\alg(T)$ in our diagrams.

Note that Roe \cite{roe_coarse_cohomology} and Moscovici--Wu \cite{moscovici_wu} use this approach in their works, i.e. work with classes in $K_{-1}^\alg(\cB M)$; though they use not the ``official'' $\smash{K_{-1}^\alg}$ but a variant of it suited for their needs.\qedhere
\end{enumerate}
\end{rem}

\subsection{Comparison with the Connes--Chern character}
\label{SectionConnesMoscovici}

Here, $M$ has to be a smooth manifold. In this section, we discuss commutativity of the diagram
\begin{equation}\label{eq_moscovici_wu}
\begin{tikzcd}
& & K^{\mathrm{an}, \mathrm{lf}}_*(M)
		\ar[ddd, "\substack{\text{Connes--} \\ \text{Chern} \\ \text{character}}"]
\\
	K^{\mathrm{alg}}_*(\mathcal{B}M)
	\ar[d, "\ch"']
	&
		\left\{\substack{\text{1-summable}\\ 
	\text{even/odd}\\\text{Fredholm} \\  \text{modules}}\right\}
	\ar[ru, "{[-]}"]
	\ar[l, "A^{\mathrm{alg}}"']
	\ar[ddr, "{\text{Moscovici--Wu}}"', dashed, bend left=10]
	&
\\
	\HP_*(\mathcal{B}M)
	\ar[d, "\chi"']
\\
	\HX^{\mathrm{per}}_*(M)
	&
	&
	H_*^{\mathrm{dR}, \mathrm{lf}, \mathrm{per}}(M).
			\ar[ll, "c"']
\end{tikzcd}
\end{equation}
which is (without the dashed arrow that we will explain below) the lower left part of \eqref{BigDiagram}. 
Commutativity of this diagram is essentially proved by Moscovici--Wu \cite{moscovici_wu}, but unfortunately, they use a rather different language to state their results.
Below we discuss how to nevertheless extract the desired statement from their paper.

They start with a finitely summable (i.e., $p$-summable for some $p\in [1,\infty)$) Fredholm module and first apply the truncation procedure from \eqref{eq_truncation} with a sequence of covers $\smash{(U_i^{(k)})_{i \in I_k}^{k \in \IN}}$ and corresponding sequence of subordinate partitions of unity satisfying that the diameters of the covers converge (uniformly in $i$) to $0$ as $k \to \infty$. 
This results in a sequence of operators $(\tilde{T}^{(k)})_{k \in \IN}$ with the property that the propagation of these operators converges to $0$ as $k \to \infty$. From these operators they then construct cycles in homology akin to the ones we get when applying the composition $\chi \circ \ch \circ A^\alg$; and since the propagation of the operators goes to $0$, they can then interpret the whole sequence of these homological cycles as a single class in Alexander--Spanier homology of $M$. Identifying the latter with de Rham homology, we get the dashed arrow in the above diagram. Basically by construction, the left part of \eqref{eq_moscovici_wu} commutes, and that the right part commutes is proven by Moscovici--Wu.

Notice that the construction of Moscovici--Wu is essentially a precursor to the localization algebra idea of Guoliang Yu \cite{yu_localization}.

\printbibliography[heading=bibintoc]

\end{document}